\newcommand{\lp}{\left(}
\newcommand{\rp}{\right)}
\newcommand{\tx}{\text}
\newtheorem{theorem}{Theorem}
\newtheorem{lemma}{Lemma}
\newtheorem{cor}{Corollary}
\newtheorem{prop}{Proposition}
\newtheorem*{theorem*}{Theorem}
\newtheorem*{cor*}{Corollary}
\newtheorem*{lemma*}{Lemma}
\newtheorem*{example*}{Example}
\theoremstyle{definition}
\newtheorem{definition}{Definition}
\newtheorem*{def*}{Definition}
\begin{document}
 
% --------------------------------------------------------------
%                         Start here
% --------------------------------------------------------------
 
\title{Prime Parking Functions on Rooted Trees}

\author{Westin King}
%\ead{wking@math.tamu.edu}
%
\author{Catherine H. Yan}
%\ead{cyan@math.tamu.edu}
%
%\address[tamu]{Department of Mathematics, Texas A\&M University, College Station, TX 77843, United States}
%\cortext[cor1]{Corresponding author}

%\date{}

\begin{abstract}
For a labeled, rooted tree with edges oriented towards the root, we consider the vertices as parking spots and the edge orientation as a one-way street. Each driver, starting with her preferred parking spot, searches for and parks in the first unoccupied spot along the directed path to the root. If all $n$ drivers park, the sequence of spot preferences is called a parking function. We consider the sequences, called \emph{prime} parking functions, for which each driver parks and each edge in the tree is traversed by some driver after failing to park at her preferred spot. We prove that the total number of prime parking functions on trees with $n$ vertices is $(2n-2)!$. Additionally, we generalize \emph{increasing} parking functions, those in which the drivers park with a weakly-increasing order of preference, to trees and prove that the total number of increasing prime parking functions on trees with $n$ vertices is $(n-1)!S_{n-1}$, where $\{S_i\}_{i \geq 0}$ are the large Schr{\"o}der numbers.
\end{abstract}
\maketitle

%\begin{keyword}
%Parking functions \sep Parking distributions \sep Enumeration \sep Trees
%\end{keyword}

% --------------------------------------------------------------

\pgfmathsetseed{14285}
\section{Introduction}

Konheim and Weiss first studied classical parking functions in 1966 by examining the probability that a random hashing function would successfully store data when collisions were resolved via linear probing \cite{KONHEIM1966}. Since then, parking functions have appeared in the study of many other combinatorial objects such as noncrossing partitions, hyperplane arrangements, posets, and trees (see \cite{EC2} and \cite{YAN2015}). A straightforward description of a classical parking function $s \in [n]^n$ is that of $n$ drivers, each with a preferred parking spot $s_i$, wishing to park on a one-way street with parking spots labeled $1$ to $n$. One-by-one the drivers attempt to park using the following parking procedure:

\begin{enumerate}
\item Driver $i$ parks at $s_i$ if it is available.
\item If it is not, she parks at the first available spot after $s_i$.
\item If there are none, she leaves the parking lot without parking.
\end{enumerate}

\begin{definition}
The sequence of parking preferences $s = (s_1,s_2,\hdots,s_n)$ is called a \emph{classical parking function of length $n$} if the parking procedure results in all $n$ drivers successfully parking. 
\end{definition}
The sequences $(1,3,2,3,1)$ and $(1,3,4,4,1)$ are parking functions in Figure \ref{fig:5path}. The sequence $(3,3,3,4,5)$ is not, as the spots 1 and 2 are unoccupied after drivers attempt to park. The total number of classical parking functions of length $n$ is $(n+1)^{(n-1)}$ \cite{KONHEIM1966}. 

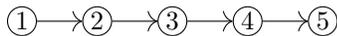
\begin{figure}[h]
\begin{center}
\begin{tikzpicture}
\node[circle,draw,inner sep=1pt] (a) at (0,0) {$1$};
\node[circle,draw,inner sep=1pt] (b) at (1,0) {$2$};
\node[circle,draw,inner sep=1pt] (c) at (2,0) {$3$};
\node[circle,draw,inner sep=1pt] (d) at (3,0) {$4$};
\node[circle,draw,inner sep=1pt] (e) at (4,0) {$5$};

\draw[arrows={->[scale=1.5]}] (a)--(b);
\draw[arrows={->[scale=1.5]}] (b)--(c);
\draw[arrows={->[scale=1.5]}] (c)--(d);
\draw[arrows={->[scale=1.5]}] (d)--(e);
\end{tikzpicture}
\caption{A path of 5 parking spots}
\label{fig:5path}
\end{center}
\end{figure}

It is well-known that a permutation of a classical parking function is still a classical parking function. That is, if $s \in [n]^n$ is a classical parking function and $\sigma \in \mathfrak{S}_n$, then $(s_{\sigma(1)},s_{\sigma(2)},\hdots,s_{\sigma(n)})$ is also a parking function. Those parking functions for which the $s_i$'s are weakly increasing, called \emph{increasing parking functions}, are counted by the ubiquitous Catalan numbers $\{\frac{1}{n+1}{2n \choose n}\}_{n\geq0}$. Rearranging the above examples, $(1,1,2,3,3)$ and $(1,1,3,4,4)$ are both classical increasing parking functions.

Another interesting subset of classical parking functions is defined by the parking functions which, after any 1 is removed from the sequence, are classical parking functions on the first $n-1$ spots. Such parking functions were called \emph{prime} by Gessel and their count is given by $(n-1)^{(n-1)}$ (\cite{EC2}, Exercise 5.49). For example, $(1,3,2,3,1)$ is a prime parking function as both $(3,2,3,1)$ and $(1,3,2,3)$ are parking functions on 4 spots. Notice that any reordering of a prime parking function is also a prime parking function, since rearranging then removing a 1 is the same as removing a 1 and rearranging appropriately.

Many generalizations of parking functions exist, including rational parking functions \cite{ARMSTRONG2016}, $G$-parking functions \cite{POSTNIKOV2004}, $\vec{u}$-parking functions \cite{KUNG2003}, and parking sequences involving cars of different lengths \cite{EHRENBORG2016}. The particular generalization this paper concerns itself with extends the process by which drivers find a place to park and is due to Lackner and Panholzer \cite{LACKNER2016}. Let $T$ be a rooted tree with vertex set $[n]$ and edges oriented towards the root. In this case, we will denote $|T| := n$. Like the classical case, let $s \in [n]^n$ be the sequence of preferred spots. One by one, driver $i$ will attempt to park at $s_i$. If it is already occupied, the driver travels along the unique path towards the root, parking in the first available spot. If no spots are available, then the driver leaves the tree without parking.

\begin{definition}
A sequence $s \in [n]^n$ is a \emph{parking function on $T$} if the parking procedure allows all cars to park on $T$.
\end{definition}

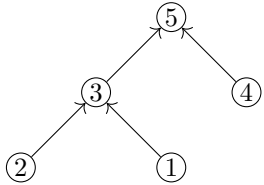
\begin{figure}[h]
\begin{center}
\begin{tikzpicture}
\node[circle,draw,inner sep=1pt] (a) at (0,0) {$5$};
\node[circle,draw,inner sep=1pt] (b) at (-1,-1) {$3$};
\node[circle,draw,inner sep=1pt] (c) at (1,-1) {$4$};
\node[circle,draw,inner sep=1pt] (d) at (-2,-2) {$2$};
\node[circle,draw,inner sep=1pt] (e) at (0,-2) {$1$};

\draw[arrows={->[scale=1.2]}] (b)--(a);
\draw[arrows={->[scale=1.2]}] (c)--(a);
\draw[arrows={->[scale=1.2]}] (d)--(b);
\draw[arrows={->[scale=1.2]}] (e)--(b);
\end{tikzpicture}
\caption{ A tree with parking function $(2,2,1,4,2)$.}
\label{fig:5tree}
\end{center}
\end{figure}

We call the pair $(T,s)$ a \emph{parking function} and give an example in Figure \ref{fig:5tree}. Notice that tree parking functions are classical parking functions when $T$ is a path with vertices labeled in decreasing order away from root $n$. We denote by $\mathcal{P}_n$ the path with $n$ vertices on which classical parking functions are defined. Figure \ref{fig:5path} is a picture of $\mathcal{P}_5$. We also note that, because vertices have outdegree at most one, the parking procedure is deterministic on rooted trees and thus well-defined.

Let $F_n$ be the total number of pairs $(T,s)$ for which $|T|=n$ and set $F(x) = \sum\limits_{n\geq 1}F_n\dfrac{x^n}{(n!)^2}$. Lackner and Panholzer \cite{LACKNER2016} found that $F(x)$ satisfies the equation

\begin{equation}\label{eq:recursion}
F(x) = T(2x)+ \ln\lp1-\frac{T(2x)}{2}\rp,
\end{equation}
where $T(x) = \sum\limits_{n\geq 1}n^{n-1}\dfrac{x^n}{n!}$ is the tree function. Furthermore, they calculated

\[
F_n = ((n-1)!)^2 \cdot \lp\sum\limits_{i=0}^{n-1}\frac{(n-i)\cdot (2n)^i}{i!}\rp.
\]

In Section \ref{sec:prime}, we define a generalization of classical prime parking functions to trees. In Section \ref{sec:gfproof}, we decompose a parking function $(T,s)$ and use Equation \eqref{eq:recursion} to prove the following theorem.
\begin{theorem}\label{thm:main}
The total number of prime tree parking functions $P_n$ for $n \geq 1$ is given by
\[
P_n = (2n-2)!
\]
\end{theorem}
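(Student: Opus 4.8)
The plan is to decompose an arbitrary tree parking function $(T,s)$ along a distinguished set of edges, turn the resulting bijection into a functional equation for $P(x):=\sum_{n\ge 1}P_n\,x^n/(n!)^2$, and then solve that equation using Equation~\eqref{eq:recursion}.

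First I would isolate the structural fact behind the decomposition: in any parking function $(T,s)$, the edge $e=(u,v)$ with $v$ the parent of $u$ is \emph{not} traversed by a failed driver precisely when exactly $|T_u|$ drivers prefer vertices of the subtree $T_u$ sitting below $e$; and in that case those $|T_u|$ drivers already form a parking function on $T_u$, none of them ever leaving $T_u$. One inclusion is a short counting argument---every vertex eventually gets occupied and drivers only move toward the root, so at least $|T_u|$ drivers must prefer $T_u$, and $e$ is traversed exactly when strictly more than $|T_u|$ do. Deleting the set $D$ of non-traversed edges then cuts $T$ into subtrees $T_0,T_1,\dots$ ($T_0$ containing the root), where $T_i$ carries exactly the $|T_i|$ drivers preferring it; since the edges inside $T_i$ are traversed and the parking dynamics on $T_i$ coincide with the restriction of the global dynamics, $(T_i,s|_{T_i})$ is itself a \emph{prime} parking function. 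Conversely, from any family of prime parking functions together with, for each non-root piece, a choice of attaching vertex in some other piece so that the pieces assemble into a rooted tree on $[n]$, one should recover a unique parking function whose non-traversed edges are exactly the attaching edges. The hard part will be this last direction: one must verify that after gluing no driver ever spills from its piece into another and that the internal edges of each piece remain traversed---both consequences of primality together with the fact that each piece receives as many drivers as it has vertices, but the argument (and keeping the vertex and driver labels straight) is where the care is needed.

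Next I would translate the bijection into generating functions. The decomposition exhibits a tree parking function as a prime parking function with an arbitrary, possibly empty, set of tree parking functions hung below each of its vertices; a prime piece on $k$ vertices contributes $P_k/(k!)^2$ and attaching a set of parking functions below a vertex contributes a factor $e^{F(x)}$ (one per vertex, giving $(xe^{F(x)})^k$ in all), so the bijection should yield
\[
F(x)=P\!\bigl(x\,e^{F(x)}\bigr).
\]

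Finally I would solve this. Writing $z:=T(2x)/2$ we have $z=xe^{2z}$, i.e.\ $x=ze^{-2z}$, and Equation~\eqref{eq:recursion} becomes $F(x)=2z+\ln(1-z)$; hence the substitution variable is explicit,
\[
u:=x\,e^{F(x)}=ze^{-2z}\cdot e^{2z}(1-z)=z(1-z).
\]
So $P(u)=2z+\ln(1-z)$ with $u=z-z^2$, and inverting to $z=\tfrac12\bigl(1-\sqrt{1-4u}\bigr)$ gives
\[
P(u)=\bigl(1-\sqrt{1-4u}\bigr)+\ln\!\left(\frac{1+\sqrt{1-4u}}{2}\right).
\]
Since $1-\sqrt{1-4u}=\sum_{n\ge 1}\tfrac2n\binom{2n-2}{n-1}u^n$ and, by differentiating, $\ln\frac{1+\sqrt{1-4u}}{2}=-\sum_{n\ge 1}\tfrac{2n-1}{n^2}\binom{2n-2}{n-1}u^n$, we get $[u^n]P(u)=\binom{2n-2}{n-1}\bigl(\tfrac2n-\tfrac{2n-1}{n^2}\bigr)=\tfrac1{n^2}\binom{2n-2}{n-1}$, and therefore
\[
P_n=(n!)^2\cdot\frac1{n^2}\binom{2n-2}{n-1}=(2n-2)!,
\]
as claimed. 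This last part is just power-series bookkeeping once the functional equation is established.
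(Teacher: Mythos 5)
Your proposal is correct and follows essentially the same route as the paper: the same decomposition into a prime ``core'' with general parking functions hung below its vertices, the same functional equation $F(x)=P\lp xe^{F(x)}\rp$, and the same substitution $u=z(1-z)$ with $z=T(2x)/2$. The only (harmless) differences are that you observe the finer fact that cutting \emph{all} unused edges yields prime pieces, and that you finish by direct coefficient extraction from the closed form rather than by differentiating $P$ and recognizing $P'=C$ as the paper does.
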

In Section \ref{sec:bijection}, we give a bijective proof for Theorem \ref{thm:main} by matching a prime parking function $(T,p)$ with a permutation and a rooted, ordered (plane) tree with labeled non-root vertices. In Section \ref{sec:extras}, we consider special subsets of the domain and image of a bijection constructed in Section \ref{sec:bijection}.

In Section \ref{sec:PD}, we consider a natural generalization of increasing parking functions, called \emph{parking distributions}, first studied by Butler, Graham, and Yan \cite{BUTLER2017}.

\begin{definition}
A \emph{parking distribution} is a parking function $(T,s)$ such that $s$ is weakly increasing.
\end{definition}
Like the classical case, the order in which drivers attempt to park does not affect their ability to park. As such, the name ``distribution" is chosen because the distribution of drivers, $|\{j : s_j = i\}|$ for $i \in V(T)$, determines whether all drivers successfully park. In this paper, we let $s$ be weakly increasing for convenience. We discuss the relationship between regular and prime parking distributions and give a generating function proof of the following theorem.

\begin{restatable}{theorem}{primedistr}\label{thm:primedistr}
The total number of prime parking distributions on trees with $n \geq 1$ vertices is given by
\[
\widetilde{P}_n = (n-1)!S_{n-1},
\]
where $\{S_i\}_{i\geq 0}$ are the large Schr{\"o}der numbers.
\end{restatable}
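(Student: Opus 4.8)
The plan is to mirror the generating-function strategy that works for Theorem \ref{thm:main}, but starting from the enumeration of \emph{all} parking distributions rather than all parking functions. First I would recall (or re-derive, in the style of Butler--Graham--Yan \cite{BUTLER2017}) the generating function for the total number $\widetilde F_n$ of pairs $(T,s)$ with $|T| = n$ and $s$ weakly increasing; since the order of the drivers is irrelevant, a parking distribution is determined by a rooted labeled tree together with the multiset $\{s_j\}$, so $\widetilde F_n$ should be expressible through a suitable exponential/ordinary hybrid generating function, and in particular I expect a functional equation of the same flavor as \eqref{eq:recursion}. Let me set $\widetilde F(x) = \sum_{n\ge 1}\widetilde F_n \frac{x^n}{n!}$ (the correct normalization — one factorial rather than two, because only the tree is labeled). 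I would pin down the closed form or functional equation for $\widetilde F(x)$ first.

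Next I would carry out the same structural decomposition of a parking distribution $(T,s)$ that Section \ref{sec:prime}--\ref{sec:gfproof} uses for ordinary prime parking functions: peel off the ``non-prime part'' at the root. An edge fails to be traversed exactly when the drivers preferring the subtree below it already fill that subtree; so a general parking distribution decomposes uniquely as a prime parking distribution on a ``core'' rooted tree, with an ordered forest of independent parking distributions hung at the core's vertices via the un-traversed edges. Translating this decomposition into generating functions gives a composition/substitution relation of the form $\widetilde F(x) = \Phi\bigl(\widetilde P(x)\bigr)$ (or an implicit relation tying the two together), where $\widetilde P(x) = \sum_{n\ge1}\widetilde P_n \frac{x^n}{(n-1)!}$ or a similar normalization chosen so the bookkeeping of which edges are cut works out cleanly. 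Solving this relation for $\widetilde P(x)$ in terms of $\widetilde F(x)$, and then extracting coefficients, should produce $\widetilde P_n = (n-1)! S_{n-1}$; the large Schröder numbers have the algebraic generating function $\sum_{n\ge 0} S_n y^n = \frac{1-y-\sqrt{1-6y+y^2}}{2y}$, so I would aim to recognize exactly that algebraic equation emerging after the substitution, which is the cleanest way to finish.

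The main obstacle I anticipate is getting the bookkeeping of the decomposition exactly right — in particular, choosing the normalizing factorials in the generating functions so that ``a prime distribution on a core tree with a forest of distributions attached along the cut edges'' composes as honest functional substitution, and making sure no labeled-vs-unlabeled or ordered-vs-unordered mismatch creeps in (the appearance of $(n-1)!$ rather than $n!$ already signals that the root, or an edge incident to it, plays a distinguished role, and that must be tracked consistently). A secondary check I would build in: verify the first few values by hand — $\widetilde P_1 = 1 = 0!\,S_0$, $\widetilde P_2 = 1 = 1!\,S_1$, $\widetilde P_3 = 2\cdot 3 = 6 = 2!\,S_2$ — to catch off-by-one errors in the functional equation before trusting the coefficient extraction. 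If the direct generating-function manipulation proves unwieldy, an alternative is to set up a Lagrange-inversion computation on the functional equation for $\widetilde F$, using the known form of \eqref{eq:recursion}'s analogue, and read off $[x^n]\widetilde P(x)$ that way.
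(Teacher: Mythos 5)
There is a genuine gap. Your plan hinges on two ingredients: the composition relation $\widetilde F(x)=\widetilde P\bigl(xe^{\widetilde F(x)}\bigr)$ (which is correct, and is Equation \eqref{eq:pdrelation1} of the paper) and an explicit, tractable expression for $\widetilde F(x)$ playing the role of \eqref{eq:recursion}. The second ingredient is the problem: the composition relation is a single equation in two unknown series, and no closed form for $\widetilde F$ analogous to $T(2x)+\ln(1-T(2x)/2)$ is available. The paper never produces one; the best it obtains is the nonlinear ODE $\widetilde F'=e^{\widetilde F}(1+x\widetilde F')(1+2x\widetilde F')$, and even that is derived \emph{after} $\widetilde P$ is known, by the same auxiliary device used to prove the theorem. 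The trick in Section \ref{sec:gfproof} that lets one invert $z=xe^{F(x)}$ depends on the miracle $z=y(1-y)$ with $y=T(2x)/2$, which has no counterpart here. So as written, your argument assumes the hard part. What actually closes the system in the paper is a different idea: introduce \emph{marked} prime parking distributions $(T,p,v)$ with $v$ a distinguished leaf, and count them two ways --- once by growing the marked leaf out of a prime distribution on $n-1$ vertices (giving $P^*(x)=x+x^2\widetilde P'(x)$), and once by decomposing along the path of the last driver preferring $v$ (giving $P^*(x)=x+\tfrac{xP^*(x)}{1-x\widetilde P'(x)}$). Eliminating $P^*$ yields a quadratic for $\widetilde P'(x)$ whose solution is the large Schr\"oder generating function, with no reference to $\widetilde F$ at all.

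A secondary but telling problem: your numerical sanity checks are wrong, and would have flagged the issue had they been computed correctly. Summing over all labeled rooted trees, $\widetilde P_2=2=1!\,S_1$ (one prime distribution on each of the two labeled trees on two vertices) and $\widetilde P_3=12=2!\,S_2$ (two prime distributions on each of the six labeled paths, none on the three stars). Your values $1$ and $6$ use $S_1=1$, $S_2=3$, which are the \emph{little} Schr\"oder numbers, and also appear to count per tree shape rather than over all labeled trees.
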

Finally, in Section \ref{sec:conclusion}, we discuss some directions for further research. This paper is the full version of Section 3 of \cite{KINGFPSAC2018}.

%%%%%%%%%%%%%%%%%%%%%
\section{Prime Parking Functions} \label{sec:prime}
%%%%%%%%%%%%%%%%%%%%%

Let $T$ be a rooted tree with vertex set $[n]$ and $v \in [n]$. We define a preorder $\preceq_T$ on $[n]$ by letting $v \preceq_T w$ if there exists a directed path from $v$ to $w$ in $T$. By convention we say $v \preceq_T v$. Let $v \in [n]$ and define $T_v$ to be the subgraph induced by the set of vertices $\{u : u \preceq_T v\}$. The following characterization of a parking function is a slight modification of that given by Lackner and Panholzer \cite{LACKNER2016}.

\begin{prop}\label{prop:prime}
The pair $(T,s)$ is a parking function if and only if for all $v \in [n]$, we have $|T_v| \leq |\{i : s_i \in T_v\}|$.
\end{prop}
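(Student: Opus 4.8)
The plan is to prove both directions by analyzing the parking process through the lens of the subtrees $T_v$, using the key structural fact that $T_v$ is "closed upward" only in the sense that once a driver enters $T_v$ (i.e., prefers a spot in $T_v$), she can only park at a spot within $T_v$ or leave the tree entirely — she can never park outside $T_v$, since the directed path from any vertex of $T_v$ stays in $T_v$ until it reaches $v$, and then continues toward the root outside $T_v$. Wait, that last point is the subtlety: a driver preferring a spot in $T_v$ whose path reaches $v$ and finds $v$ occupied will continue past $v$ and may park outside $T_v$. So the correct monotonicity statement is the reverse: the only drivers who can occupy spots in $T_v$ are those whose preference lies in $T_v$, because to reach a vertex $u \preceq_T v$ one must start at some vertex $w$ with $w \preceq_T u \preceq_T v$. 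This is the one-line observation that drives the necessity direction.

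For necessity, suppose $(T,s)$ is a parking function. Fix $v$. Every one of the $|T_v|$ vertices of $T_v$ ends up occupied, and by the observation above each occupant is a driver $i$ with $s_i \preceq_T$ some vertex of $T_v$, i.e. $s_i \in T_v$. Since distinct spots have distinct occupants, $|T_v| \le |\{i : s_i \in T_v\}|$.

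For sufficiency, I would argue by contradiction: suppose the inequality holds for every $v$ but some driver $j$ fails to park. When driver $j$ fails, her preference $s_j$ and every vertex on the directed path from $s_j$ to the root is already occupied. Let $r$ be the root; then in particular all of the ancestors of $s_j$ up to $r$ are occupied at that moment, hence occupied at the end. I want to find a vertex $v$ witnessing a violation. Consider the set $A$ of vertices that are occupied at the end and lie on the root-path of $s_j$ together with... actually the cleanest route: let $v$ be chosen so that $T_v$ consists exactly of the set of all spots from which, during the whole process, \emph{no} driver ever exited $T_v$ to the root — equivalently, take $v$ to be an ancestor of $s_j$ at which the chain of "full" vertices is "saturated from below." Concretely, since the process is deterministic and order-independent (as the paper notes), run the drivers and look at the final configuration: driver $j$ didn't park, so strictly more drivers tried to enter some $T_v$ than $|T_v|$. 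Formally, let $v$ be the last (closest to root) vertex such that every spot in $T_v$ is occupied at the end AND no driver who entered $T_v$ ever parked outside $T_v$; one shows such $v$ exists by taking $v$ to be the top of the maximal "blocked" segment containing $s_j$. Then the drivers forced into $T_v$ are precisely a subset of $\{i : s_i \in T_v\}$, all $|T_v|$ of them park inside, and driver $j$ (with $s_j \in T_v$) is an extra one who does not — giving $|\{i: s_i \in T_v\}| \ge |T_v| + 1$, a contradiction.

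The main obstacle is making the choice of the witnessing vertex $v$ precise in the sufficiency direction: one must argue that the "overflow" caused by driver $j$'s failure can be localized to a single subtree $T_v$ whose boundary is never crossed outward. I would handle this by tracking, for each vertex $u$, whether any driver passed through $u$ heading rootward, and defining $v$ as the minimal ancestor of $s_j$ (in $\preceq_T$) such that the parent edge of $v$ is never traversed; the saturation count for this $T_v$ is then exactly the number of drivers whose preferences land in $T_v$, and since all of $T_v$ is full yet $j$ did not park, that count exceeds $|T_v|$. An alternative, perhaps slicker, approach is to induct on $n$: remove a leaf $\ell$ of $T$; if no driver prefers $\ell$ then the hypothesis and conclusion both descend to $T \setminus \ell$ after deleting... but leaf-deletion interacts awkwardly with the counting condition, so I expect the direct process-analysis argument above to be the one to carry out in full.
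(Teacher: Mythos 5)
Your necessity direction is correct and is exactly the paper's argument: only drivers preferring a spot in $T_v$ can occupy a spot in $T_v$, and all $|T_v|$ spots end up occupied.

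The sufficiency direction has a genuine gap, and in fact the argument is aimed in the wrong direction. First, the inequality you derive at the end, $|\{i : s_i \in T_v\}| \geq |T_v| + 1$, does not contradict the hypothesis $|T_v| \leq |\{i : s_i \in T_v\}|$ --- it is an instance of it. An excess of drivers over a subtree is never an obstruction to parking (indeed, prime parking functions have $|T_v| < |\{i : s_i \in T_v\}|$ at every non-root $v$ and they do park); the obstruction you must exhibit is a subtree with a \emph{deficit} of drivers, $|T_v| > |\{i : s_i \in T_v\}|$. Second, your proposed witness cannot be found where you look for it. A driver $j$ who fails to park traverses every edge on the path from $s_j$ to the root, so there is no ancestor of $s_j$ whose parent edge is untraversed, and all ancestors of $s_j$ are occupied at the moment $j$ fails; in examples such as $\mathcal{P}_3$ with $s=(2,2,3)$ the only violating vertex is $v=1$, which is not an ancestor of $s_3=3$ at all.

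The paper's argument is much shorter and goes the other way: if some driver fails to park, then since there are $n$ drivers and $n$ spots, some spot $v$ is unoccupied at the end. No driver ever reaches $v$ during her search (she would have parked there, as a driver must take the first empty spot she meets), so every driver preferring a spot in $T_v$ parks inside $T_v$ and none parks at $v$. Hence $|\{i : s_i \in T_v\}| \leq |T_v| - 1 < |T_v|$, violating the condition. I recommend replacing your witness construction with this one; the rest of your write-up (in particular the opening observation that occupants of $T_v$ must prefer spots in $T_v$) can stand.
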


\begin{proof}
Suppose $(T,s)$ is a parking function. Then at the end of parking, every vertex is occupied by some car. For any $v$, the only drivers that can park in $T_v$ are those preferring a spot in $T_v$, so we must have $|T_v| \leq |\{i : s_i \in T_v\}|$.

On the other hand, suppose that $(T,s)$ is not a parking function. Then there is some driver who does not park, meaning there is some unoccupied spot $v$. Then $|T_v| > |\{i : s_i \in T_v\}|$, since $v$ is empty and drivers must park in the first unoccupied spot they reach.
\end{proof}
Notice in the classical parking function case, $(\mathcal{P}_n,s)$ is a parking function if and only if for $k \in [n]$, we have $k \leq |\{i: s_i \leq k\}|$. In addition, a classical parking function $s$ is prime if and only if $k < |\{i: s_i \geq k\}|$ when $ k \leq n-1$ (see \cite{EC2}, Exercise 5.49f). In this spirit, we define prime parking functions on trees and give an example in Figure \ref{fig:primetree}.

\begin{definition}
A parking function $(T,s)$ is \emph{prime} if, for every non-root vertex $v \in [n]$, we have $|T_v| < |\{i : s_i \in T_v\}|$.
\end{definition}

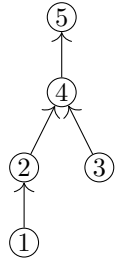
\begin{figure}[h!]
\begin{center}
%\vspace{.5cm}
\begin{tikzpicture}
\node[circle,draw,inner sep=1pt] (a) at (0,0) {$5$};
\node[circle,draw,inner sep=1pt] (b) at (0,-1) {$4$};
\node[circle,draw,inner sep=1pt] (c) at (0.5,-2) {$3$};
\node[circle,draw,inner sep=1pt] (d) at (-0.5,-2) {$2$};
\node[circle,draw,inner sep=1pt] (e) at (-0.5,-3) {$1$};

\draw[arrows={->[scale=1.5]}] (b)--(a);
\draw[arrows={->[scale=1.5]}] (d)--(b);
\draw[arrows={->[scale=1.5]}] (c)--(b);
\draw[arrows={->[scale=1.5]}] (e)--(d);
\end{tikzpicture}
\caption{$T$ with prime $p = (1,3,2,3,1)$.}
\label{fig:primetree}
\end{center}
\end{figure} 

We briefly introduce some notation. If there is an edge of the form $u \rightarrow v$, we represent the edge as the ordered pair $(u,v)$. Additionally, we denote $$\mathcal{PF}_n = \{(T,p) : |T|=n \text{ and $p$ is a prime parking function on }T\},$$ and so $P_n = |\mathcal{PF}_n|$. Prime parking functions can also be understood in terms of edges crossed by cars failing to park at their preferred spots. 

\begin{definition}
For a parking function $(T,s)$, we say that an edge $e$ is \emph{used} by $s$ if there exists some driver who, after failing to park at her preferred spot, crosses $e$ during her search for an unoccupied spot. 
\end{definition}
\begin{figure}[h]
\begin{center}
\begin{tikzpicture}
\node[circle,draw,inner sep=1pt] (a) at (0,0) {$1$};
\node[circle,draw,inner sep=1pt] (b) at (1,0) {$2$};
\node[circle,draw,inner sep=1pt] (c) at (2,0) {$3$};
\node[circle,draw,inner sep=1pt] (d) at (3,0) {$4$};
\node[circle,draw,inner sep=1pt] (e) at (4,0) {$5$};

\draw[arrows={->[scale=1.5]}] (a)--(b);
\draw[arrows={->[scale=1.5]},dashed] (b)--(c);
\draw[arrows={->[scale=1.5]},dashed] (c)--(d);
\draw[arrows={->[scale=1.5]}] (d)--(e);
\end{tikzpicture}
\caption{Solid edges are used by $s=(1,3,4,4,1)$}
\label{fig:crossed}
\end{center}
\end{figure}
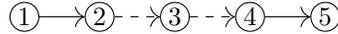
Figure \ref{fig:crossed} shows a classical parking function with both used and unused edges. Whether or not an edge $(u,v)$ is used by a parking function depends only on the number of cars preferring $T_v$.

\begin{prop}\label{prop:edges}
Given a parking function $(T,s)$, an edge $e = (u,v)$ is used by $s$ if and only if $|T_u| < |\{ i : s_i \in T_u\}|$. Furthermore, the set of edges used by $s$ is invariant under permutations of $s$.
\end{prop}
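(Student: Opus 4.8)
The plan is to leverage Proposition~\ref{prop:prime} together with a counting/monotonicity argument, reducing everything to statements about the single quantity $m_v := |\{i : s_i \in T_v\}|$, the number of drivers whose preferred spot lies in the subtree $T_v$. Since $(T,s)$ is assumed to be a parking function, Proposition~\ref{prop:prime} already gives $|T_u| \le m_u$ for every vertex $u$; the content of the claim is to identify exactly when equality holds with ``edge $e=(u,v)$ is used,'' and then to observe that $m_u$ depends only on the multiset of preferences, not their order.

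For the main equivalence I would argue by contrapositive in one direction and directly in the other. First suppose $|T_u| = m_u$ (the only alternative to $|T_u| < m_u$, given the parking-function hypothesis). Then every driver who prefers a spot in $T_u$ must in fact park in $T_u$: there are exactly $|T_u|$ such drivers and exactly $|T_u|$ spots in $T_u$, and since drivers park at the first available spot along their path to the root, no driver preferring $T_u$ can ever leave $T_u$ (the moment she would cross $e$, all $|T_u|$ spots of $T_u$ would be occupied by drivers preferring $T_u$, but then she herself, preferring $T_u$, would already have parked). Conversely, drivers preferring spots outside $T_u$ never enter $T_u$, since the directed path only moves toward the root. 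Hence no driver ever crosses $e=(u,v)$, so $e$ is not used. For the other direction, suppose $|T_u| < m_u$, i.e.\ strictly more than $|T_u|$ drivers prefer spots in $T_u$. Since only $|T_u|$ of them can park inside $T_u$, at least one driver preferring some spot in $T_u$ must park outside $T_u$; her search path from her preferred spot to wherever she parks leaves $T_u$, and the only edge leaving $T_u$ toward the root is $e=(u,v)$, so she crosses $e$. Thus $e$ is used. This establishes the equivalence.

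The second assertion is then immediate: the quantity $|\{i : s_i \in T_u\}|$ is a function of the multiset $\{s_1,\dots,s_n\}$ and is unchanged when $s$ is permuted, while $|T_u|$ depends only on $T$; and permuting $s$ preserves the property of being a parking function (this follows, e.g., from the order-independent characterization in Proposition~\ref{prop:prime}). By the equivalence just proved, the set of used edges is exactly $\{(u,v) \in E(T) : |T_u| < |\{i : s_i \in T_u\}|\}$, which is permutation-invariant.

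I expect the main obstacle to be making the ``no driver preferring $T_u$ ever leaves $T_u$ when $|T_u| = m_u$'' step fully rigorous, since it requires tracking the parking dynamics rather than just the static inequality. The clean way to handle it is to apply Proposition~\ref{prop:prime} to the \emph{restricted} problem: regardless of when they arrive, the drivers preferring $T_u$, acting among themselves on $T_u$, form a parking function of $T_u$ precisely because $|T_u| \le m_u$ forces $m_u = |T_u|$ and every proper subtree $T_w \subseteq T_u$ still satisfies $|T_w| \le m_w$; so all of them park within $T_u$ and none has occasion to cross $e$. The rest is bookkeeping about directed paths in a rooted tree.
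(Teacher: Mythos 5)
Your proof is correct and follows essentially the same route as the paper's: both directions come down to the pigeonhole observation that only drivers preferring $T_u$ can ever park in $T_u$, combined with the parking-function hypothesis that all of $T_u$ gets filled, and the permutation-invariance claim is read off from the resulting static characterization exactly as in the paper. The only cosmetic difference is that you prove the ``used $\Rightarrow$ strict inequality'' direction by contrapositive (via the restriction-to-$T_u$ argument), where the paper argues it directly; this changes nothing of substance.
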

\begin{proof}
Suppose $e = (u,v)$ is used by $s$. Then at least one driver preferring $T_u$ does not park in $T_u$. No cars preferring a vertex outside $T_u$ can park inside, as $T_u$ consists of all vertices $w$ such that $w \preceq_T u$. The pair $(T,s)$ is a parking function, so it follows that $|T_u|< |\{ i : s_i \in T_u\}|$. On the other hand, if $|T_u|< |\{ i : s_i \in T_u\}|$, then as $s$ is a parking function on $T$, at least one driver preferring $T_u$ must park outside. This driver must cross $e$ in order to do so.

The set characterization of used edges does not change if the letters of $s$ are permuted.
\end{proof}
As an immediate corollary we see,

\begin{cor}\label{cor:edges_prime}
A parking function $(T,p)$ is prime if and only if every edge in $T$ is used by $p$.
\end{cor}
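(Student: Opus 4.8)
The plan is to combine Proposition \ref{prop:edges} with the definition of a prime parking function; the only real content is the observation that the edges of $T$ correspond bijectively to the non-root vertices.

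First I would record that correspondence. Since every edge of $T$ is oriented towards the root, each non-root vertex $u$ has out-degree exactly one, so there is a unique edge $e_u = (u,v)$ with tail $u$; conversely the root is the unique vertex of out-degree zero, so $u \mapsto e_u$ is a bijection from the non-root vertices of $T$ onto $E(T)$. By Proposition \ref{prop:edges}, for a parking function $(T,p)$ the edge $e_u = (u,v)$ is used by $p$ if and only if $|T_u| < |\{i : p_i \in T_u\}|$.

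Next I would simply unwind the definitions: $(T,p)$ is prime if and only if $|T_v| < |\{i : p_i \in T_v\}|$ for every non-root vertex $v$, which by the previous paragraph is equivalent to $e_v$ being used by $p$ for every non-root vertex $v$, and hence — via the bijection $v \mapsto e_v$ — equivalent to every edge of $T$ being used by $p$. This gives both directions at once.

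There is essentially no obstacle: the statement is a direct translation of the primality condition through Proposition \ref{prop:edges}, and the only point requiring (minor) care is to invoke the non-root-vertex/edge correspondence so that the quantifier ``for every non-root vertex'' becomes ``for every edge,'' which relies on the edge orientation towards the root.
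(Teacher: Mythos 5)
Your proof is correct and matches the paper's (implicit) argument: the paper treats this as an immediate consequence of Proposition~\ref{prop:edges}, and the non-root-vertex/edge correspondence you spell out is exactly the bookkeeping that makes it immediate.
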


%Since there is already a notion of prime parking functions on paths, we show the two definitions are equivalent.
%
%\begin{prop}
%A classical parking function $p$ is prime if and only if every edge in the path is used by $p$.
%\end{prop}
%
%\begin{proof}
%Since the order of $p$ does not change the edges used, we may reorder $p$ in a weakly decreasing manner to form $p'$ so that $p'_i \geq p'_{i+1}$ for $1 \leq i \leq n-1$. Also notice that in both cases, there are at least two cars in $p$ preferring the spot $1$.
%
%Let $p$ be a prime parking function and park all cars in $p'$ except for $p'_n = 1$. Since $p'$ is also a prime parking function, the first $n-1$ drivers park in the first $n-1$ spots and the final car uses every edge on its way to park at $n$.
%
%On the other hand, suppose every edge is used by $p$. Removing any $1$ from $p'$ yields the same sequence, so we only need concern ourselves with removing the final 1, $p'_n$. Assume the final car of $p'$ parks at $k \neq n$. Since $k$ is empty immediately before the final driver parks, the edge $(k,k+1)$ is not used by any of the $n$ cars. Since all edges are used by assumption, the final driver must park at spot $n$, meaning the first $n-1$ cars park in the first $n-1$ spots. Thus both $p$ and $p'$ are prime parking functions.
%\end{proof}

%%%%%%%%%%%%%%%%%%%%%%%%%%
\section{Proof of Theorem \ref{thm:main} via Generating Functions} \label{sec:gfproof}
%%%%%%%%%%%%%%%%%%%%%%%%%%

Recall that $F_n$ is the number of pairs $(T,s)$ with $|T|=n$ and $s$ a parking function on $T$, while $P_n$ is the number of such pairs where $s$ is prime. We define for both $F_n$ and $P_n$ the generating functions
\[
F(x) = \sum\limits_{n \geq 1} F_n \dfrac{x^n}{(n!)^2} \hspace{1cm} \tx{and}  \hspace{1cm} P(x) = \sum\limits_{n \geq 1} P_n \dfrac{x^n}{(n!)^2}.
\]
We choose $(n!)^2$ for the denominator to account for both the relabellings of the tree's vertices and the re-orderings of the preference sequence $s$.

For a parking function $(T,s)$, we consider a decomposition of $T$ into a ``core" component supporting a prime parking function, $(T_0,s^{(0)})$, and some collection of general parking functions, $(T_i,s^{(i)})$, attached to the core component. Let $T_0$ be the subtree of $T$ containing the root and all vertices connected to the root via edges used by $s$. Let $s^{(0)}$ be the subsequence of $s$ defined by drivers preferring vertices in $T_0$. Notice that $s^{(0)}$ is a prime parking function by construction. The other subtrees $T_i$ are the connected components remaining after deleting edges $(u,v)$ unused by $s$, where $v \in V(T_0)$ and $u \notin V(T_0)$, and $s^{(i)}$ are the subsequences of $s$ consisting of drivers preferring vertices in $T_i$. 

Figure \ref{fig:generaldecomp} gives a general overview of this decomposition. Dashed edges are those unused by $s$ but connected to the ``core" component $T_0$. In Figure \ref{fig:crossed}, the ``core" component is the subtree induced by vertex set $\{4,5\}$ while $s^{(0)} = (4,4)$. The one other component is the subgraph induced by vertices $\{1,2,3\}$ (identical to $\mathcal{P}_3$) with $s^{(1)} = (1,3,1)$. Notice that $(1,3,1)$ is not a prime parking function on $\mathcal{P}_3$.

\begin{figure}[h!]
\begin{center}
\begin{tikzpicture}
\node[ellipse,draw, minimum width=90pt, minimum height=65pt] at (0,0) {$T_0$};
\node[ellipse,draw, minimum width=45pt, minimum height=70pt] at (-3.2,-2.5) {$T_1$};
\node[ellipse,draw, minimum width=45pt, minimum height=70pt] at (-1.5,-2.5) {$T_2$};
\node[ellipse,draw, minimum width=45pt, minimum height=70pt] at (0.2,-2.5) {$T_3$};
\node[ellipse,draw, minimum width=45pt, minimum height=70pt] at (2.5,-2.5) {$T_r$};
\node at (1.3,-2.5) {$\hdots$};

% T_1,T_2,T_3,T_r,T_0
\node at (-3.2,-1.77){
\scalebox{.3}{
\begin{forest}
random tree/.style n args={2}{% #1=max levels, #2=max children
if={#1>0}{repeat={random(0,#2)}{append={[,random tree={#1-1}{#2}]}}}{},
  parent anchor=center, child anchor=center, grow=south},
[,random tree={3}{3}]
\end{forest}
}
};
\node at (-1.5,-1.77){
\scalebox{.3}{
\begin{forest}
random tree/.style n args={2}{% #1=max levels, #2=max children
if={#1>0}{repeat={random(0,#2)}{append={[,random tree={#1-1}{#2}]}}}{},
  parent anchor=center, child anchor=center, grow=south},
[,random tree={3}{3}]
\end{forest}
}
};
\node at (0.2,-1.77){
\scalebox{.3}{
\begin{forest}
random tree/.style n args={2}{% #1=max levels, #2=max children
if={#1>0}{repeat={random(0,#2)}{append={[,random tree={#1-1}{#2}]}}}{},
  parent anchor=center, child anchor=center, grow=south},
[,random tree={3}{3}]
\end{forest}
}
};
\node at (2.7,-1.77){
\scalebox{.3}{
\begin{forest}
random tree/.style n args={2}{% #1=max levels, #2=max children
if={#1>0}{repeat={random(0,#2)}{append={[,random tree={#1-1}{#2}]}}}{},
  parent anchor=center, child anchor=center, grow=south},
[,random tree={3}{3}]
\end{forest}
}
};
\node at (0,0.7){
\scalebox{.4}{
\begin{forest}
random tree/.style n args={2}{% #1=max levels, #2=max children
if={#1>0}{repeat={random(0,#2)}{append={[,random tree={#1-1}{#2}]}}}{},
  parent anchor=center, child anchor=center, grow=south},
[,random tree={2}{3}]
\end{forest}
}
};

\draw[arrows={->[scale=1.5]},dashed] (-3.2,-1.28)--(-1,-.5);
\draw[arrows={->[scale=1.5]},dashed] (-1.5,-1.28)--(-.3,0);
\draw[arrows={->[scale=1.5]},dashed] (0.2,-1.28)--(0.5,0.5);
\draw[arrows={->[scale=1.5]},dashed] (2.5,-1.28)--(0.5,0.5);

\end{tikzpicture}
\caption{Decomposition into components.}
\label{fig:generaldecomp}
\end{center}
\end{figure}

In this way, we can construct any parking function $(T,s)$ with $|T|=n$ by choosing a prime parking function $(T_0,s^{(0)})$ with $|T_0|=k_0$ and $r$-many other regular parking functions $\{(T_i,s^{(i)})\}_{i=1}^r$ with $k_i  = |T_i|  \geq 1$ and $\sum_{i=0}^r k_i = n$. From there, we can attach each $T_i$ to $T_0$ in one of $k_0$-many places to form $T$. What remains is to choose the labels on $T$ and choose which indices in $s$ each $s^{(i)}$ is assigned. The $1/r!$ accounts for the order in which the $T_i$ are chosen and attached. This means

\[
F_n = \sum\limits_{r\geq 0}\frac{1}{r!}\sum\limits_{\sum\limits_{i=0}^rk_i=n}P_{k_{0}} F_{k_{1}}\cdots F_{k_{r}}{n \choose k_0,k_1,\hdots,k_r}^2(k_0)^r.
\]
Since $F_1=1$, summing over $n \geq 1$, we get the relationship

\begin{equation}\label{eq:composition}
F(x) = P\lp xe^{F(x)}\rp.
\end{equation}
Using \eqref{eq:recursion},

\begin{equation}\label{eq:Pcombined}
P\lp xe^{F(x)} \rp = T(2x) + \ln\lp 1-\frac{T(2x)}{2} \rp.
\end{equation}
Setting $z=z(x) = xe^{F(x)}$, $y=y(x) = \frac{T(2x)}{2}$, and using the relation $T(x) = xe^{T(x)}$, we notice from Equation \eqref{eq:recursion} that 
\[
z = y(1-y).
\] 
Solving the quadratic equation gives

\[
y = zC(z),
\]
where $C(x) = \sum\limits_{n \geq 0}C_n x^n$ is the ordinary generating function for the Catalan numbers with analytic expression

\begin{equation*}
C(x) = \frac{1-\sqrt{1-4x}}{2x}.
\end{equation*}
Since $z(0) = 0$ and $z_1\neq 0$, the formal power series $z(x)$ has a compositional inverse. Rewriting Equation \eqref{eq:Pcombined} and plugging in the inverse, we get

\[
P(x) = 2xC(x) + \ln(1-xC(x)).
\]
The Catalan generating function $C(x)$ satisfies the recursion $C(x) = 1+xC(x)^2$ and thus $C'(x)$ satisfies

\begin{equation*}
C'(x) = \frac{xC(x)^2}{1-2xC(x)}.
\end{equation*}
After some algebra, we can see

\begin{equation}\label{eq:oddcatalan}
\frac{C(x)}{(xC(x))'} = \frac{1-2xC(x)}{1-xC(x)}.
\end{equation}
Then taking the derivative of $P(x)$ and using \eqref{eq:oddcatalan}, we have

\begin{align*}
P'(x) &= 2(xC(x))' - \frac{(xC(x))'}{1-xC(x)} \\
&= (xC(x))'\lp\frac{1-2xC(x)}{1-xC(x)} \rp \\
&= C(x).
\end{align*}
Therefore,

\[
P(x) = \sum\limits_{n\geq1}\frac{C_{n-1}}{n}x^n = \sum\limits_{n\geq 1}(2n-2)!\frac{x^n}{(n!)^2}.
\]
Hence, $P_n = (2n-2)!$ as claimed. Such a simple number demands a bijective proof, which we give in the next section.

%%%%%%%%%%%%%%%%%%%
\section{Bijective Proof of Theorem \ref{thm:main}}\label{sec:bijection}
%%%%%%%%%%%%%%%%%%%

In order to determine $P_n$, we define a bijection $\psi: (T,p) \mapsto (\sigma,P)$ where $\sigma \in \mathfrak{S}_n$ and $P$ is an ordered tree with $n$ vertices whose non-root vertices are labeled by $[n-1]$. An ordered tree, also called a plane tree, is a rooted tree for which siblings are given a linear order. 

To describe $\psi$, we first give a definition. Recall post-order labeling: given an ordered tree $\mathcal{T}$, travel around the left border of the tree starting from the root, labeling in increasing order as one reaches the right side of a vertex. See the final picture in Figure \ref{fig:phi} for a tree labeled by post-order.

\begin{definition}\label{def:SRPn}
For an ordered tree $\mathcal{T}$ with $|\mathcal{T}|=n$ and sequence $p \in [n]^n$, we say the pair $(\mathcal{T},p)$ is a \emph{standardized restricted prime parking function} if 
\begin{enumerate}
\item $(\mathcal{T},p)$ is a prime parking function when $\mathcal{T}$ is considered an unordered tree.
\item For any pair of sibling vertices, $\{u,v\}\subseteq V(\mathcal{T})$, the vertex $v$ is ordered to the right of $u$ if and only if the edge $(v,w)$, where $w$ is the parent vertex, is crossed by some driver before the edge $(u,w)$ is crossed during the parking procedure.
\item $\mathcal{T}$ is labeled via post-order.
\end{enumerate}
\end{definition}

We denote the set of standardized restricted prime parking functions on $n$ vertices by $\mathcal{SRP}_n$. If $\mathcal{T}$ is an ordered tree, we say $(\mathcal{T},p)$ is a prime parking function when the pair is a parking function if the ordering on $\mathcal{T}$ is forgotten.

The bijection $\psi$ is constructed from two bijections, $\phi: (T,p) \mapsto (\sigma,(T_\sigma,p_\sigma))$ where $(T_\sigma,p_\sigma) \in \mathcal{SRP}_n$, and $\alpha: (T_\sigma,p_\sigma) \mapsto P$, where $P$ is the aforementioned ordered tree with non-root vertices labeled by $[n-1]$. The unlabeled ordered trees on $n$ vertices are counted by the Catalan number $C_{n-1}$, so including the labellings there are $C_{n-1}(n-1)!$-many such $P$. Combining these two steps, we conclude that $P_n = n!C_{n-1}(n-1)! = (2n-2)!$.

\subsection{The Bijection $\phi$}

We prove

\begin{prop}\label{prop:SRPn}
For $n \geq 1$, we have
\[
P_n = n!|\mathcal{SRP}_n|.
\]
\end{prop}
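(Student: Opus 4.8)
The plan is to exhibit a bijection $\phi: \mathcal{PF}_n \to \mathfrak{S}_n \times \mathcal{SRP}_n$, which immediately gives $P_n = n!\,|\mathcal{SRP}_n|$ since $|\mathfrak{S}_n| = n!$. The idea is that the permutation $\sigma$ records the relabeling of vertices needed to pass from an arbitrary labeled tree to a canonical one, and the ordering on the tree in $\mathcal{SRP}_n$ records the dynamic information of which edge at each branching is first crossed during parking. First I would show how to go from $(T,p) \in \mathcal{PF}_n$ to $(\sigma, (T_\sigma, p_\sigma))$: run the parking procedure on $(T,p)$; by Corollary \ref{cor:edges_prime} every edge is used, so for each internal vertex $w$ the children of $w$ inherit a linear order according to the time at which the edge from that child to $w$ is first crossed (condition (2) of Definition \ref{def:SRPn}); this makes $T$ into an ordered tree. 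Relabel its vertices by post-order to get the ordered tree $T_\sigma$; let $\sigma \in \mathfrak{S}_n$ be the permutation sending the old label of each vertex to its new (post-order) label, and set $p_\sigma = \sigma \circ p$, i.e. replace each preference $p_i$ by $\sigma(p_i)$ (and re-sort/re-index the entries as needed, but since permuting entries of a parking function is harmless by Proposition \ref{prop:edges}, the precise indexing convention does not matter). One must check $(T_\sigma, p_\sigma) \in \mathcal{SRP}_n$: condition (3) holds by construction; condition (1) holds because relabeling vertices by the fixed bijection $\sigma$ and applying $\sigma$ to all preferences does not change whether $|T_v| < |\{i : p_i \in T_v\}|$ for each $v$; condition (2) holds because relabeling does not change the order in which edges are crossed.

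Next I would construct the inverse. Given $(\sigma, (\mathcal{T}, q)) \in \mathfrak{S}_n \times \mathcal{SRP}_n$, apply $\sigma^{-1}$ to the vertex labels of $\mathcal{T}$ to get an unordered labeled tree $T$ with vertex set $[n]$, forget the planar order, and set $p = \sigma^{-1} \circ q$. This is the inverse of $\phi$ provided two things hold: that $\phi$ lands in the claimed codomain (above) and that the ordered-tree structure we stripped off can be recovered — but it can, because on $(\mathcal{T},q) \in \mathcal{SRP}_n$ the planar order is uniquely determined by the parking dynamics via condition (2), so no information is lost in passing to $\mathcal{SRP}_n$. Concretely: $\phi$ is injective because $(T,p)$ can be recovered from $(\sigma, (T_\sigma, p_\sigma))$ by the recipe just given; $\phi$ is surjective because, starting from any $(\sigma, (\mathcal{T}, q))$ in the codomain, the preimage $(T,p) := (\sigma^{-1}\mathcal{T}, \sigma^{-1}q)$ is a genuine prime parking function (by condition (1) and invariance of primality under relabeling) whose image under $\phi$ is $(\sigma, (\mathcal{T},q))$ — here one uses that post-order labeling of the ordered tree obtained from $(T,p)$ recovers $\mathcal{T}$ exactly, and that the induced order matches the original order of $\mathcal{T}$ by condition (2).

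The main obstacle I expect is verifying carefully that $\phi$ is well-defined as a map into $\mathcal{SRP}_n$ — in particular that the "first-crossing" order on siblings in condition (2) is actually a total order (two distinct children edges of the same vertex cannot be "first-crossed" simultaneously, which is clear since the drivers park one at a time, but distinct children edges might be crossed by the same driver in sequence, so one needs the convention that we compare the first time any driver crosses each edge, and argue no edge is left uncrossed — this is exactly Corollary \ref{cor:edges_prime}) — and that this order is genuinely invariant under the relabeling by $\sigma$, so that the post-order relabeling step is consistent. A secondary subtlety is bookkeeping the indices of the preference sequence under the relabeling $p \mapsto \sigma \circ p$; I would handle this by noting (Proposition \ref{prop:edges}) that all relevant data — which edges are used, the order in which they are first crossed, and primality — depend only on the multiset of preferences lying in each $T_v$, hence are unaffected by how we index the sequence, so we may fix any convention (say, keeping the original index set) without affecting the argument.
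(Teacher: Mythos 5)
Your proposal is correct and follows essentially the same route as the paper: induce the sibling order from the first-crossing times of edges (using that primality forces every edge to be used), relabel by post-order to extract $\sigma$, and invert by applying $\sigma^{-1}$ and forgetting the planar order. The extra care you take about well-definedness of the sibling order and invariance under relabeling is sound and only makes explicit what the paper leaves implicit.
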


\begin{proof}
Let $(T,p) \in \mathcal{PF}_n$. We induce an ordering on $T$ by using $p$. Since the parking function is prime, every edge must be crossed by some driver after failing to park at her preferred spot. Additionally, since there is only one path a driver may travel in search of a spot and because drivers must park at the first available spot they encounter, the order in which edges are initially crossed is well-defined. Then, consider $T$ as an ordered tree by ordering vertices to match property 2 in Definition \ref{def:SRPn}.

For a $\sigma \in \mathfrak{S}_n$, let $T_\sigma$ be the tree obtained by relabeling the vertices of $T$, $v \mapsto \sigma(v)$. Likewise, let $p_\sigma = (\sigma(p_1),\sigma(p_2),\hdots,\sigma(p_n))$. Then we define
\[
\phi((T,p)) = (\sigma,(T_\sigma,p_\sigma)),
\]
where $\sigma$ is the unique permutation such that $T_\sigma$ is labeled by post-order. This means $(T_\sigma,p_\sigma) \in \mathcal{SRP}_n$.

Since any two relabellings of an ordered tree are distinct, for $(\widetilde{T},\widetilde{p}) \in \mathcal{SRP}_n$, the preimage $\phi^{-1}((\sigma,(\widetilde{T},\widetilde{p}))$ is the prime parking function $(\widetilde{T}_{\sigma^{-1}},\widetilde{p}_{\sigma^{-1}})$, where the ordering on the tree is forgotten.
\end{proof}

\begin{figure}[h]
\begin{center}
\begin{tikzpicture}
\node[circle,draw,inner sep=1pt] (a) at (0,0) {$1$};
\node[circle,draw,inner sep=1pt] (b) at (0,-1) {$4$};
\node[circle,draw,inner sep=1pt] (c) at (-.5,-2) {$5$};
\node[circle,draw,inner sep=1pt] (d) at (.5,-2) {$3$};
\node[circle,draw,inner sep=1pt] (e) at (.5,-3) {$2$};
\draw[arrows={->[scale=1.5]}] (b)--(a);
\draw[arrows={->[scale=1.5]}] (c)--(b);
\draw[arrows={->[scale=1.5]}] (d)--(b);
\draw[arrows={->[scale=1.5]}] (e)--(d);
\node (z) at (0,0.5) {$p = (2,5,3,5,2)$};

\node[circle,draw,inner sep=1pt] (f) at (4,0) {$1$};
\node[circle,draw,inner sep=1pt] (g) at (4,-1) {$4$};
\node[circle,draw,inner sep=1pt] (h) at (4.5,-2) {$5$};
\node[circle,draw,inner sep=1pt] (i) at (3.5,-2) {$3$};
\node[circle,draw,inner sep=1pt] (j) at (3.5,-3) {$2$};
\draw[arrows={->[scale=1.5]}] (g)--(f);
\draw[arrows={->[scale=1.5]}] (h)--(g);
\draw[arrows={->[scale=1.5]}] (i)--(g);
\draw[arrows={->[scale=1.5]}] (j)--(i);
\node (k) at (4,0.5) {$p = (2,5,3,5,2)$};

\node[circle,draw,inner sep=1pt] (l) at (8,0) {$5$};
\node[circle,draw,inner sep=1pt] (m) at (8,-1) {$4$};
\node[circle,draw,inner sep=1pt] (n) at (8.5,-2) {$3$};
\node[circle,draw,inner sep=1pt] (o) at (7.5,-2) {$2$};
\node[circle,draw,inner sep=1pt] (p) at (7.5,-3) {$1$};
\draw[arrows={->[scale=1.5]}] (m)--(l);
\draw[arrows={->[scale=1.5]}] (n)--(m);
\draw[arrows={->[scale=1.5]}] (o)--(m);
\draw[arrows={->[scale=1.5]}] (p)--(o);
\node (q) at (8,0.5) {$p_\sigma = (1,3,2,3,1)$};
\node (t) at (8,-3.5) {$\sigma = 51243$};

\node (r) at (2,-1.5) {$\longrightarrow$};
\node (s) at (6,-1.5) {$\longrightarrow$};
\node (u) at (2,-1) {Order siblings};
\node (v) at (6,-1) {Relabel};
\node (w) at (6,-2) {by post-order};
\end{tikzpicture}
\caption{An example of $\phi$.}
\label{fig:phi}
\end{center}
\end{figure}
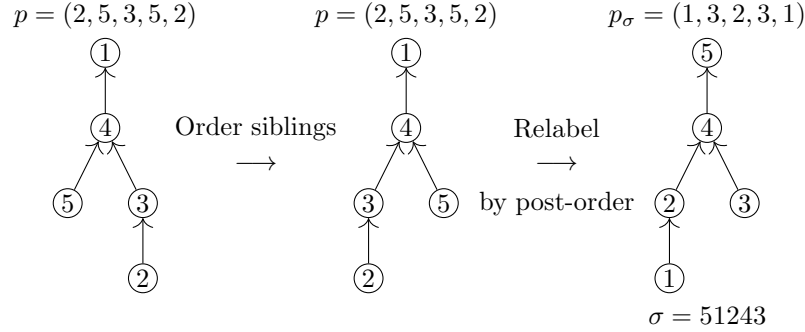

Figure \ref{fig:phi} shows an application of $\phi$. In the left tree, the edge $(5,4)$ is first used by the fourth driver, while the edge $(3,4)$ is not used until the fifth driver. Thus, the vertex 5 is placed to the right of vertex 3, which gives the ordered tree in the middle of the figure. The right tree is obtained from the middle one by relabeling via post-order, which gives the permutation $\sigma$. We now turn our attention to $|\mathcal{SRP}_n|$.

\subsection{The Bijection $\alpha$}\label{subsec:alpha}

We define $\alpha: \mathcal{SRP}_n \rightarrow \{\text{ordered trees with non-root vertices labeled by }[n-1]\}$. The main observation necessary for the construction of $\alpha$ is a decomposition of $(\mathcal{T},p) \in \mathcal{SRP}_n$ into an ordered collection of components, each a relabeling of a standardized restricted prime parking function, based on the edges used by the first $n-1$ drivers. We use $\mathcal{T}$ to emphasize that $\mathcal{T}$ is an ordered tree. First, we give a proposition about where the final driver must park for prime parking functions, which also applies to parking functions in $\mathcal{SRP}_n$.

\begin{prop}\label{prop:finalspot}
Let $(T,p) \in \mathcal{PF}_n$. Then the final driver parks at the root node.
\end{prop}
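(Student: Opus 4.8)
The plan is to argue by contradiction using the characterization of prime parking functions in terms of the counts $|\{i : p_i \in T_v\}|$, combined with a counting argument on where the final driver can park. Suppose $(T,p) \in \mathcal{PF}_n$ and the final (i.e., $n$-th) driver does not park at the root $\rho$. Since $(T,p)$ is a parking function, every vertex is occupied at the end, so the final driver parks at some non-root vertex $v$. Let $w$ be the parent of $v$, so $(v,w)$ is an edge of $T$. I want to derive a contradiction with primeness at the vertex $w$, or more precisely with the fact that primeness forces ``too many'' cars to want to be inside $T_w$.

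First I would establish the following bookkeeping fact: at the moment the $n$-th driver arrives, all of $T_v \setminus \{v\}$ is already full (the driver stops at $v$ because $v$ is the first empty spot along her path, so everything strictly below her stopping point on that path is occupied — but I actually need all of $T_v\setminus\{v\}$ full, which follows because the first $n-1$ drivers already park, and by Proposition~\ref{prop:prime} applied to each vertex of $T_v\setminus\{v\}$ together with the fact that only $|T_v| - 1$ cars total end up inside $T_v$ before the last one, the $n-1$ already-parked cars must fill $T_v \setminus \{v\}$ exactly and $v$ is the unique empty spot of $T_v$). Consequently exactly $|T_v| - 1$ of the first $n-1$ drivers prefer spots in $T_v$, hence exactly $|T_v|$ of all $n$ drivers prefer spots in $T_v$ (the last driver's preference lies weakly below $v$, so in $T_v$).

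Now I would use primeness at $v$: the definition gives $|T_v| < |\{i : p_i \in T_v\}|$, i.e.\ $|\{i : p_i \in T_v\}| \geq |T_v| + 1$. This directly contradicts the count $|\{i : p_i \in T_v\}| = |T_v|$ just derived. Hence the final driver must park at the root. The only subtlety — and the step I expect to be the main obstacle — is justifying cleanly that $v$ is the \emph{unique} empty spot of $T_v$ when the last driver arrives, i.e.\ that no other vertex of $T_v$ was skipped over; this is where one must use that the first $n-1$ cars do park (so nothing inside $T_v$ is left permanently empty except possibly $v$) together with the deterministic ``first available spot'' rule (a car entering $T_v$ from outside is impossible since $T_v$ is downward-closed, and a car that stops strictly below $v$ cannot have passed an empty spot). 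Once that is nailed down, the rest is the short counting contradiction above. Alternatively, and perhaps more slickly, I could phrase the whole argument via Corollary~\ref{cor:edges_prime}: every edge of $T$, in particular $(v,w)$, is used by $p$, so some driver crosses $(v,w)$ after failing to park at her preference; but if the last driver stops at $v$, then after she parks $v$ is occupied and $(v,w)$ can only have been crossed by someone who found $v$ occupied — yet $v$ was the last spot filled, so no such crossing happened, contradicting that $(v,w)$ is used. I would present whichever of these two phrasings turns out shorter; the edge-based one avoids the uniqueness bookkeeping entirely.
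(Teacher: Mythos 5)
Your proposal is correct, and your second, edge-based phrasing is exactly the paper's proof: the edge from the final driver's spot $\omega$ to its parent can never be used (earlier drivers would have parked at the empty $\omega$ rather than cross it, and the final driver stops there), contradicting Corollary \ref{cor:edges_prime}. Your first, counting-based phrasing also works and is simpler than you fear: you do not need $v$ to be the \emph{unique} empty spot of $T_v$, only that no driver among the first $n-1$ who prefers $T_v$ can cross $(v,w)$ while $v$ is still empty, so at most $|T_v|-1$ of them, hence at most $|T_v|$ drivers in total, prefer $T_v$, already contradicting $|T_v| < |\{i : p_i \in T_v\}|$.
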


\begin{proof}
Let $\omega$ be the vertex the final driver parks at. If $\omega$ is not the root, it has a parent vertex $v$. Since drivers must park at the first empty vertex they arrive at, the edge $(\omega,v)$ can not be used by any driver prior to the final one, since $\omega$ is unoccupied. Since the final driver also does not use $(\omega,v)$, it remains unused. However, $p$ is a prime parking function on $T$, so all edges must be used. Thus, there can be no edge $(\omega,v)$, and so $\omega$ is the root of $T$.
\end{proof}

We first describe the recursive construction of $\alpha$, then prove it is a bijection. Begin with $(\mathcal{T},p) \in \mathcal{SRP}_n$. We use the tree in Figure \ref{fig:running} as a running example.

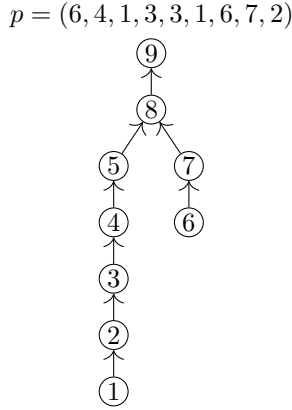
\begin{figure}[h]
\begin{center}
\begin{tikzpicture}
\node (a) at (0,0.5) {$p=(6,4,1,3,3,1,6,7,2)$};
\node[circle,draw,inner sep=1pt] (b) at (0,0) {$9$};
\node[circle,draw,inner sep=1pt] (c) at (0,-.75) {$8$};
\node[circle,draw,inner sep=1pt] (d) at (0.5,-1.5) {$7$};
\node[circle,draw,inner sep=1pt] (e) at (0.5,-2.25) {$6$};
\node[circle,draw,inner sep=1pt] (f) at (-0.5,-1.5) {$5$};
\node[circle,draw,inner sep=1pt] (g) at (-0.5,-2.25) {$4$};
\node[circle,draw,inner sep=1pt] (h) at (-0.5,-3) {$3$};
\node[circle,draw,inner sep=1pt] (i) at (-0.5,-3.75) {$2$};
\node[circle,draw,inner sep=1pt] (j) at (-0.5,-4.5) {$1$};

\draw[arrows={->[scale=1.5]}] (c)--(b);
\draw[arrows={->[scale=1.5]}] (d)--(c);
\draw[arrows={->[scale=1.5]}] (e)--(d);
\draw[arrows={->[scale=1.5]}] (f)--(c);
\draw[arrows={->[scale=1.5]}] (g)--(f);
\draw[arrows={->[scale=1.5]}] (h)--(g);
\draw[arrows={->[scale=1.5]}] (i)--(h);
\draw[arrows={->[scale=1.5]}] (j)--(i);
\end{tikzpicture}
\caption[caption]{A $(\mathcal{T},p) \in \mathcal{SRP}_n$.}
\label{fig:running}
\end{center}
\end{figure}

\textbf{Base Case.} If $\mathcal{T}$ is a singleton, then $\alpha((\mathcal{T},p))$ is an unlabeled singleton as $|\mathcal{SRP}_1|=1$.

\textbf{Step 1.} Park all except the final driver, highlighting edges as they are used. Delete the non-highlighted edges and the root, marking the terminal vertex of any edge deleted and the vertex with label $p_n$.

Since $p$ is a prime parking function on $\mathcal{T}$, the non-highlighted edges must lie on the path $\mathcal{P}$ between the vertex labeled $p_n$ and the root. The highlighted edges define some collection of subtrees $\{\mathcal{T}_i\}_{i=1}^r$, linearly ordered by the order in which they are a part of $\mathcal{P}$. Since the root is always isolated, as the final driver is the only one to cross the edge connected to the root, we may ignore it. Let $p^{(i)}$ be the subsequence of $(p_1,\hdots,p_{n-1})$ consisting of all $p_j$ such that $p_j \in V(\mathcal{T}_i)$. By construction, $p^{(i)}$ is a prime parking function on $\mathcal{T}_i$ satisfying conditions 1 and 2 in Definition \ref{def:SRPn}. See Figure \ref{fig:decomp} for this step applied to our running example. The non-highlighted edges are dotted and the marked vertices are shaded.

\begin{figure}[h]
\begin{center}
\begin{tikzpicture}
\node (a) at (0,0.5) {$p=(6,4,1,3,3,1,6,7,2)$};
\node[circle,draw,inner sep=1pt] (b) at (0,0) {$9$};
\node[circle,draw,inner sep=1pt] (c) at (0,-.75) {$8$};
\node[circle,draw,inner sep=1pt] (d) at (0.5,-1.5) {$7$};
\node[circle,draw,inner sep=1pt] (e) at (0.5,-2.25) {$6$};
\node[circle,draw,inner sep=1pt] (f) at (-0.5,-1.5) {$5$};
\node[circle,draw,inner sep=1pt] (g) at (-0.5,-2.25) {$4$};
\node[circle,draw,inner sep=1pt] (h) at (-0.5,-3) {$3$};
\node[circle,draw,inner sep=1pt] (i) at (-0.5,-3.75) {$2$};
\node[circle,draw,inner sep=1pt] (j) at (-0.5,-4.5) {$1$};

\draw[arrows={->[scale=1.5]},dotted] (c)--(b);
\draw[arrows={->[scale=1.5]}] (d)--(c);
\draw[arrows={->[scale=1.5]}] (e)--(d);
\draw[arrows={->[scale=1.5]},dotted] (f)--(c);
\draw[arrows={->[scale=1.5]}] (g)--(f);
\draw[arrows={->[scale=1.5]}] (h)--(g);
\draw[arrows={->[scale=1.5]},dotted] (i)--(h);
\draw[arrows={->[scale=1.5]}] (j)--(i);

\node (v) at (3,-2.25) {$\longrightarrow$};

\node (k) at (5.2,-4.13) {$p^{(1)}=(1,1)$};
\node[circle,draw,inner sep=1pt,fill=yellow] (l) at (6.5,-3.75) {$2$};
\node[circle,draw,inner sep=1pt] (m) at (6.5,-4.5) {$1$};

\draw[arrows={->[scale=1.5]}] (m)--(l);

\node (n) at (5,-2.25) {$p^{(2)}=(4,3,3)$};
\node[circle,draw,inner sep=1pt] (o) at (6.5,-1.5) {$5$};
\node[circle,draw,inner sep=1pt] (p) at (6.5,-2.25) {$4$};
\node[circle,draw,inner sep=1pt,fill=yellow] (q) at (6.5,-3) {$3$};

\draw[arrows={->[scale=1.5]}] (p)--(o);
\draw[arrows={->[scale=1.5]}] (q)--(p);

\node (r) at (8.7,-1) {$p^{(3)}=(6,6,7)$};
\node[circle,draw,inner sep=1pt,fill=yellow] (s) at (7,-.75) {$8$};
\node[circle,draw,inner sep=1pt] (t) at (7.5,-1.5) {$7$};
\node[circle,draw,inner sep=1pt] (u) at (7.5,-2.25) {$6$};

\draw[arrows={->[scale=1.5]}] (t)--(s);
\draw[arrows={->[scale=1.5]}] (u)--(t);
\end{tikzpicture}
\caption[caption]{Step 1, decomposing.}
\label{fig:decomp}
\end{center}
\end{figure}
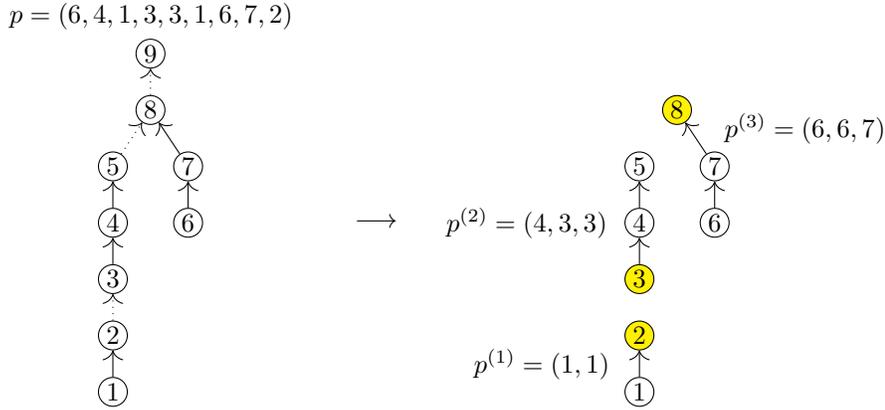

\textbf{Step 2.} For each $(\mathcal{T}_i,p^{(i)})$, let $A_i =\{j \in [n-1] : p_j \in V(\mathcal{T}_i)\}$. For $1 \leq i \leq r$, if the marked vertex on $\mathcal{T}_i$ has the $k^\tx{th}$ smallest label among vertices in $\mathcal{T}_i$, mark the $k^\tx{th}$ smallest element in $A_i$.

Notice that the elements of $A_i$ are precisely those $j$ such that $p_j$ appears in $p^{(i)}$. The marked vertices and elements track how the $\mathcal{T}_i$ are connected to each other in $\mathcal{T}$. Figure \ref{fig:labelchildren} shows this step. The elements of $A_i$ are represented by $C_j$ instead of $j$ to emphasize that $j$ is an index from $p$, rather than a vertex of $\mathcal{T}$.

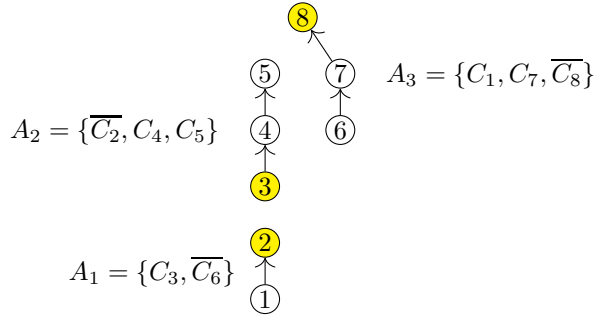
\begin{figure}[h]
\begin{center}
\begin{tikzpicture}
\node (a) at (0,0.5) {$p=(6,4,1,3,3,1,6,7,2)$};
%\node[circle,draw,inner sep=1pt] (b) at (0,0) {$9$};
\node[circle,draw,inner sep=1pt,fill=yellow] (c) at (0,-.75) {$8$};
\node[circle,draw,inner sep=1pt] (d) at (0.5,-1.5) {$7$};
\node[circle,draw,inner sep=1pt] (e) at (0.5,-2.25) {$6$};
\node[circle,draw,inner sep=1pt] (f) at (-0.5,-1.5) {$5$};
\node[circle,draw,inner sep=1pt] (g) at (-0.5,-2.25) {$4$};
\node[circle,draw,inner sep=1pt,fill=yellow] (h) at (-0.5,-3) {$3$};
\node[circle,draw,inner sep=1pt,fill=yellow] (i) at (-0.5,-3.75) {$2$};
\node[circle,draw,inner sep=1pt] (j) at (-0.5,-4.5) {$1$};

\draw[arrows={->[scale=1.5]}] (d)--(c);
\draw[arrows={->[scale=1.5]}] (e)--(d);
\draw[arrows={->[scale=1.5]}] (g)--(f);
\draw[arrows={->[scale=1.5]}] (h)--(g);
\draw[arrows={->[scale=1.5]}] (j)--(i);

\node (p) at (-2,-4.13) {$A_1=\{C_3,\overline{C_6}\}$};
\node (q) at (-2.5,-2.25) {$A_2 = \{\overline{C_2},C_4,C_5\}$};
\node (r) at (2.5,-1.5) {$A_3 = \{C_1,C_7,\overline{C_8}\}$};
\end{tikzpicture}
\caption[caption]{Step 2, tracking $p$ and the shape of $\mathcal{T}$.}
\label{fig:labelchildren}
\end{center}
\end{figure}

\textbf{Step 3.} Relabel each $(\mathcal{T}_i,p^{(i)})$ so that its labels are in post-order, meaning $(\overline{\mathcal{T}}_i,\overline{p}^{(i)}) \in \mathcal{SRP}_n$. Apply $\alpha$ to each $(\overline{\mathcal{T}}_i,\overline{p}^{(i)})$.

In Figure \ref{fig:induction}, we show components after they have been relabeled and apply $\alpha$ to each.

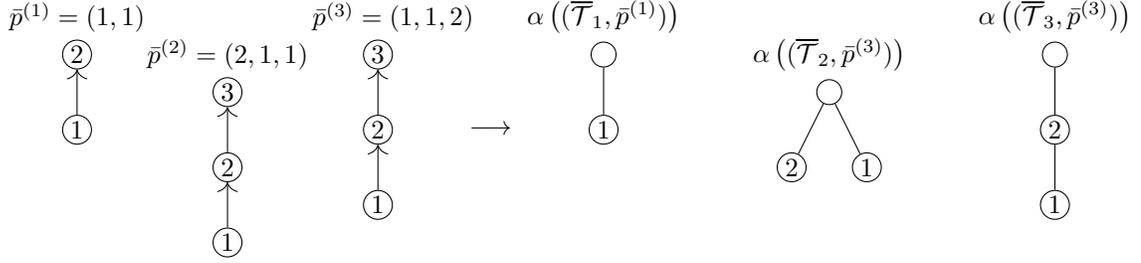
\begin{figure}[h]
\begin{center}
\begin{tikzpicture}
\node (k) at (0,1) {$\bar{p}^{(1)}=(1,1)$};
\node[circle,draw,inner sep=1pt] (l) at (0,0.5) {$2$};
\node[circle,draw,inner sep=1pt] (m) at (0,-.5) {$1$};
\draw[arrows={->[scale=1.5]}] (m)--(l);

\node (n) at (2,0.5) {$\bar{p}^{(2)}=(2,1,1)$};
\node[circle,draw,inner sep=1pt] (o) at (2,0) {$3$};
\node[circle,draw,inner sep=1pt] (p) at (2,-1) {$2$};
\node[circle,draw,inner sep=1pt] (q) at (2,-2) {$1$};
\draw[arrows={->[scale=1.5]}] (p)--(o);
\draw[arrows={->[scale=1.5]}] (q)--(p);

\node (r) at (4.2,1) {$\bar{p}^{(3)}=(1,1,2)$};
\node[circle,draw,inner sep=1pt] (s) at (4,0.5) {$3$};
\node[circle,draw,inner sep=1pt] (t) at (4,-.5) {$2$};
\node[circle,draw,inner sep=1pt] (u) at (4,-1.5) {$1$};
\draw[arrows={->[scale=1.5]}] (t)--(s);
\draw[arrows={->[scale=1.5]}] (u)--(t);

\node (v) at (5.5,-.5) {$\longrightarrow$};

\node (a) at (7,1) {$\alpha\lp(\overline{\mathcal{T}}_1,\bar{p}^{(1)}) \rp$};
\node[circle,draw,radius=1cm] (b) at (7,0.5) {};
\node[circle,draw,inner sep=1pt] (c) at (7,-.5) {$1$};
\draw (c)--(b);

\node (d) at (10,0.5) {$\alpha\lp(\overline{\mathcal{T}}_2,\bar{p}^{(3)}) \rp$};
\node[circle,draw,radius=1cm] (e) at (10,0) {};
\node[circle,draw,inner sep=1pt] (f) at (9.5,-1) {$2$};
\node[circle,draw,inner sep=1pt] (g) at (10.5,-1) {$1$};
\draw (f)--(e);
\draw (g)--(e);

\node (h) at (13,1) {$\alpha\lp(\overline{\mathcal{T}}_3,\bar{p}^{(3)}) \rp$};
\node[circle,draw,radius=1cm] (i) at (13,0.5) {};
\node[circle,draw,inner sep=1pt] (j) at (13,-.5) {$2$};
\node[circle,draw,inner sep=1pt] (k) at (13,-1.5) {$1$};
\draw (k)--(j);
\draw (j)--(i);
\end{tikzpicture}
\caption[caption]{Step 3, applying $\alpha$ on the components.}
\label{fig:induction}
\end{center}
\end{figure}

\textbf{Step 4.} For each ordered tree $\alpha((\overline{\mathcal{T}}_i,\overline{p}^(i))),$ label the root with the marked element of $A_i$ and relabel the rest of the vertices with the unmarked elements of $A_i$, preserving relative ordering. Denote these trees by $\{P_i\}_{i=1}^r$. Attach their roots to an unlabeled vertex and arrange the subtrees so that the subtree $P_i$ is to the left of $P_j$ if $i < j$. This is $\alpha((\mathcal{T},p))$.

Figure \ref{fig:relabel} shows the relabeling and Figure \ref{fig:fwdfinal} shows the final result of the running example. We constructed $\alpha$ to prove the following lemma.

%%%%%%%%%%%%%%%%%%%%%%%%%%%%%%%%%%%%%%%%%%%%%%%%%%%%%%%%%%%%%
% P I C T U R E S
%%%%%%%%%%%%%%%%%%%%%%%%%%%%%%%%%%%%%%%%%%%%%%%%%%%%%%%%%%%%%

\begin{figure}[h]
\begin{center}
\begin{tikzpicture}
\node (u) at (7,-1) {$A_1=\{C_3,\overline{C_6}\}$};
\node (a) at (7,1) {$\alpha\lp(\overline{\mathcal{T}}_1,\overline{p}^{(1)}) \rp$};
\node[circle,draw,radius=1cm] (b) at (7,0.5) {};
\node[circle,draw,inner sep=1pt] (c) at (7,-.5) {$1$};
\draw (c)--(b);

\node (w) at (9.8,-1.5) {$A_2 = \{\overline{C_2},C_4,C_5\}$};
\node (d) at (9.8,0.5) {$\alpha\lp(\overline{\mathcal{T}}_2,\overline{p}^{(3)}) \rp$};
\node[circle,draw,radius=1cm] (e) at (9.8,0) {};
\node[circle,draw,inner sep=1pt] (f) at (9.3,-1) {$2$};
\node[circle,draw,inner sep=1pt] (g) at (10.3,-1) {$1$};
\draw (f)--(e);
\draw (g)--(e);

\node (x) at (12.6,-2) {$A_3 = \{C_1,C_7,\overline{C_8}\}$};
\node (h) at (12.6,1) {$\alpha\lp(\overline{\mathcal{T}}_3,\overline{p}^{(3)}) \rp$};
\node[circle,draw,radius=1cm] (i) at (12.6,0.5) {};
\node[circle,draw,inner sep=1pt] (j) at (12.6,-.5) {$2$};
\node[circle,draw,inner sep=1pt] (k) at (12.6,-1.5) {$1$};
\draw (k)--(j);
\draw (j)--(i);

\node (v) at (14,-.5) {$\longrightarrow$};

\node (j) at (16,1) {$P_1$};
\node[circle,draw,inner sep=1pt] (k) at (16,0.5) {$6$};
\node[circle,draw,inner sep=1pt] (l) at (16,-.5) {$3$};
\draw (l)--(k);

\node (m) at (18.5,0.5) {$P_2$};
\node[circle,draw,inner sep=1pt] (n) at (18.5,0) {$2$};
\node[circle,draw,inner sep=1pt] (o) at (18,-1) {$5$};
\node[circle,draw,inner sep=1pt] (p) at (19,-1) {$4$};
\draw (o)--(n);
\draw (p)--(n);

\node (q) at (21,1) {$P_3$};
\node[circle,draw,inner sep=1pt] (r) at (21,0.5) {$8$};
\node[circle,draw,inner sep=1pt] (s) at (21,-.5) {$7$};
\node[circle,draw,inner sep=1pt] (t) at (21,-1.5) {$1$};
\draw (t)--(s);
\draw (s)--(r);

\end{tikzpicture}
\caption[caption]{Step 4, relabeling the trees from Figure \ref{fig:induction} with the sets from Figure \ref{fig:labelchildren}.}
\label{fig:relabel}
\end{center}
\end{figure}
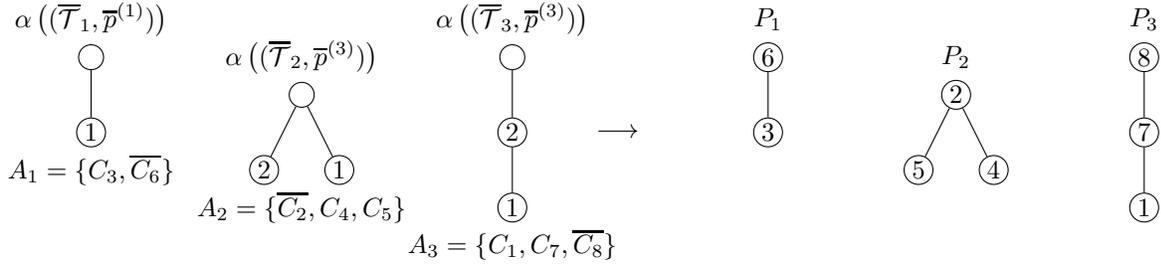

\begin{figure}[h]
\begin{center}
\begin{tikzpicture}
\node (a) at (0,0.5) {$p=(6,4,1,3,3,1,6,7,2)$};
\node[circle,draw,inner sep=1pt] (b) at (0,0) {$9$};
\node[circle,draw,inner sep=1pt] (c) at (0,-.75) {$8$};
\node[circle,draw,inner sep=1pt] (d) at (0.5,-1.5) {$7$};
\node[circle,draw,inner sep=1pt] (e) at (0.5,-2.25) {$6$};
\node[circle,draw,inner sep=1pt] (f) at (-0.5,-1.5) {$5$};
\node[circle,draw,inner sep=1pt] (g) at (-0.5,-2.25) {$4$};
\node[circle,draw,inner sep=1pt] (h) at (-0.5,-3) {$3$};
\node[circle,draw,inner sep=1pt] (i) at (-0.5,-3.75) {$2$};
\node[circle,draw,inner sep=1pt] (j) at (-0.5,-4.5) {$1$};

\draw[arrows={->[scale=1.5]}] (c)--(b);
\draw[arrows={->[scale=1.5]}] (d)--(c);
\draw[arrows={->[scale=1.5]}] (e)--(d);
\draw[arrows={->[scale=1.5]}] (f)--(c);
\draw[arrows={->[scale=1.5]}] (g)--(f);
\draw[arrows={->[scale=1.5]}] (h)--(g);
\draw[arrows={->[scale=1.5]}] (i)--(h);
\draw[arrows={->[scale=1.5]}] (j)--(i);

\node (v) at (2.5,-1.5) {$\longrightarrow$};

\node (k) at (5,0) {$\alpha\lp(T,p) \rp$};
\node[circle,draw,radius=1cm] (l) at (5,-.75) {};
\node[circle,draw,inner sep=1pt] (m) at (4,-1.5) {$6$};
\node[circle,draw,inner sep=1pt] (n) at (5,-1.5) {$2$};
\node[circle,draw,inner sep=1pt] (o) at (6,-1.5) {$8$};
\node[circle,draw,inner sep=1pt] (p) at (4,-2.25) {$3$};
\node[circle,draw,inner sep=1pt] (q) at (4.7,-2.25) {$5$};
\node[circle,draw,inner sep=1pt] (r) at (5.3,-2.25) {$4$};
\node[circle,draw,inner sep=1pt] (s) at (6,-2.25) {$7$};
\node[circle,draw,inner sep=1pt] (t) at (6,-3) {$1$};
\draw (m)--(l);
\draw (n)--(l);
\draw (o)--(l);
\draw (p)--(m);
\draw (q)--(n);
\draw (r)--(n);
\draw (s)--(o);
\draw (t)--(s);

\end{tikzpicture}
\caption[caption]{The result of $\alpha$.}
\label{fig:fwdfinal}
\end{center}
\end{figure}
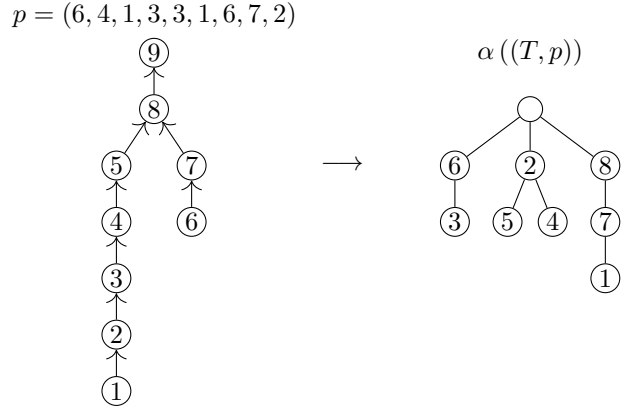

%%%%%%%%%%%%%%%%%%%%%%%%%%%%%%%%%%%%%%%%%%%%%%%%%%%%%%%%%%%%%
% \ P I C T U R E S
%%%%%%%%%%%%%%%%%%%%%%%%%%%%%%%%%%%%%%%%%%%%%%%%%%%%%%%%%%%%%

\begin{lemma} \label{lem:alpha} For $n \geq 1$
\[
|\mathcal{SPF}_n| = C_{n-1}(n-1)!
\]
\end{lemma}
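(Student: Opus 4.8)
The plan is to prove that the recursively defined map $\alpha$ is a bijection from $\mathcal{SRP}_n$ (Definition \ref{def:SRPn}) onto the set of ordered trees on $n$ vertices whose $n-1$ non-root vertices are labeled by $[n-1]$. As already observed, there are $C_{n-1}$ ordered trees on $n$ vertices and $(n-1)!$ ways to label the non-root vertices, so such a bijection yields $|\mathcal{SRP}_n| = C_{n-1}(n-1)!$ immediately. I would argue by induction on $n$, the case $n=1$ being the stipulated correspondence of singletons.

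\emph{Well-definedness.} First I would check that each recursive call in Step~3 is legitimate and that Step~4 produces an honest labeling by $[n-1]$. The structural facts needed here come from Corollary \ref{cor:edges_prime} (every edge is used) together with Proposition \ref{prop:finalspot} (the last driver parks at the root): after the first $n-1$ drivers park, the un-highlighted edges are exactly those on the directed path $\mathcal{P}$ from the last driver's preferred vertex to the root, so deleting them severs $\mathcal{T}$ (minus the now-isolated root) into subtrees $\mathcal{T}_1,\dots,\mathcal{T}_r$ strung along $\mathcal{P}$, each $\mathcal{T}_i$ being a full subtree of $\mathcal{T}$ with one full lower subtree excised. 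A driver preferring a vertex of $\mathcal{T}_i$ cannot escape $\mathcal{T}_i$, since the edge out of its root is un-highlighted, and all $|V(\mathcal{T}_i)|$ vertices of $\mathcal{T}_i$ are occupied by the end of step $n-1$; hence $|A_i| = |V(\mathcal{T}_i)|$ and $p^{(i)}$ is a prime parking function on $\mathcal{T}_i$ satisfying conditions~1 and~2 of Definition \ref{def:SRPn} (every edge of $\mathcal{T}_i$ is highlighted, hence used; sibling orders are inherited from $\mathcal{T}$). Each $|V(\mathcal{T}_i)| < n$, so after the post-order normalization of Step~3 the inductive hypothesis applies to each component, and since the $A_i$ partition $[n-1]$ with $|A_i| = |V(\mathcal{T}_i)|$, Step~4 consumes each label of $[n-1]$ exactly once.

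\emph{The inverse.} Given an ordered tree $P$ on $n$ vertices with non-root vertices labeled by $[n-1]$, I would split off its root to obtain subtrees $P_1,\dots,P_r$ (left to right), record $A_i$ (the labels in $P_i$) and $m_i$ (the root label of $P_i$), normalize $P_i$ to an ordered tree $Q_i$ with non-root vertices labeled by $[|P_i|-1]$ preserving order, and set $(\overline{\mathcal{T}}_i,\overline{p}^{(i)}) := \alpha^{-1}(Q_i) \in \mathcal{SRP}_{|P_i|}$ by induction. The vertex of $\overline{\mathcal{T}}_i$ to be treated as marked is the one whose post-order label equals the rank of $m_i$ within $A_i$; this is forced because post-order on $\mathcal{T}$ restricts, on each $V(\mathcal{T}_i)$, to the post-order of $\mathcal{T}_i$ on its own (a full subtree with a full lower subtree removed is traversed in the same relative order), so ranks match. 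Then reassemble: for $i = r, r-1, \dots, 2$ attach the root of $\overline{\mathcal{T}}_{i-1}$ as the rightmost child of the marked vertex of $\overline{\mathcal{T}}_i$, hang the root of $\overline{\mathcal{T}}_r$ under a new root $\rho$, recover $p$ by placing the $\ell$-th entry of $\overline{p}^{(i)}$ at the position given by the $\ell$-th smallest element of $A_i$ and setting $p_n$ to be the marked vertex of $\overline{\mathcal{T}}_1$, and finally read vertex names from the post-order labeling of the assembled tree. One then checks $\alpha^{-1}\circ\alpha = \mathrm{id}$ and $\alpha\circ\alpha^{-1} = \mathrm{id}$ by unwinding the two constructions step by step, invoking the inductive hypothesis on the components.

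\emph{Main obstacle.} The delicate point is confirming that the reassembled pair actually lies in $\mathcal{SRP}_n$, and in particular that condition~2 of Definition \ref{def:SRPn} holds globally. The resolution is that the edge from the root of $\mathcal{T}_{i-1}$ to the marked vertex of $\mathcal{T}_i$ is un-highlighted, hence first crossed only by the last driver, whereas every edge internal to a component is first crossed by one of the first $n-1$ drivers; so among the edges entering the marked vertex of $\mathcal{T}_i$ this one is crossed last, which by condition~2 forces the root of $\mathcal{T}_{i-1}$ to be the rightmost child of the marked vertex of $\mathcal{T}_i$ — exactly what the reassembly prescribes. One also notes that no vertex is the head of two deleted edges (the deleted edges all lie on the single path $\mathcal{P}$), so distinct components attach at distinct vertices and the global sibling structure is precisely the union of the within-component orders together with these rightmost attachments; combined with the inductive claim for the $\mathcal{T}_i$, this gives conditions~1--3 for the whole tree and closes the induction.
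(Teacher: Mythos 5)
Your argument follows the paper's route exactly: decompose $(\mathcal{T},p)$ along the edges left unused by the first $n-1$ drivers (which lie on the path from $p_n$ to the root), recurse on the resulting components, and record the attachment points via the marked elements of the sets $A_i$. The verification you sketch for well-definedness (primality of each $p^{(i)}$, $|A_i|=|V(\mathcal{T}_i)|$, preservation of relative post-order within components, recovery of the marked vertex by rank) is sound and matches the paper's.

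There is, however, a concrete error in your reassembly step and in the justification you give for it. Condition 2 of Definition \ref{def:SRPn} says $v$ is ordered to the \emph{right} of its sibling $u$ exactly when the edge $(v,w)$ is crossed \emph{before} $(u,w)$; hence among the children of $w$, the one whose edge is crossed \emph{last} is the \emph{leftmost} child. The deleted edge from the root $\rho_{i-1}$ of $\mathcal{T}_{i-1}$ into the marked vertex of $\mathcal{T}_i$ is crossed only by the final driver, i.e., after every other edge entering that vertex, so $\rho_{i-1}$ must be reattached as the \emph{leftmost} child --- not the rightmost, as you prescribe and as you incorrectly deduce from condition 2 in your final paragraph. (This is visible in Figure \ref{fig:decomp}, where $\rho_2=5$ sits to the left of its sibling $7$ under the marked vertex $8$, and it is also what makes the roots $\rho_i$ lie on the left border of $\mathcal{T}$, the fact the paper uses to get consecutive post-order label intervals for the components.) As written, your inverse composed with $\alpha$ reverses the sibling order at every attachment point, so it is not the inverse map. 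The fix is only to swap ``rightmost'' for ``leftmost'' in that step; with that change your argument agrees with the paper's proof.
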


\begin{proof}
There are $C_{n-1}(n-1)!$ ordered trees with $n$ vertices with non-root vertices labeled by $[n-1]$. Therefore, it is sufficient to show that $\alpha$ is a bijection, which we do inductively. In the case that $|T|=1$, the only pair is $(T,1)$ and $\alpha\lp(T,1)\rp$ is an unlabeled singleton.

Now suppose $\alpha$ is a bijection for all parking functions in $\mathcal{SRP}_k$ with $k < n$ and let $(\mathcal{T},p) \in \mathcal{SRP}_n$. Run the parking procedure on $T$ for all but the final driver. Since $p$ is a prime parking function, all edges except for some on the path $\mathcal{P}$ from $p_n$ to the root have been used. Deleting these unused edges creates a collection of trees which are naturally ordered by the order their vertices appear on $\mathcal{P}$. Since we know the root component is a singleton by Proposition \ref{prop:finalspot}, we may ignore it and label the other components $\{\mathcal{T}_i\}_{i=1}^r$ for some $r$. 

We claim that $\mathcal{T}_i$ has smallest vertex label $1+\sum_{j=1}^{i-1}|\mathcal{T}_j|$ and largest label $|\mathcal{T}_i|+\sum_{j=1}^{i-1}|\mathcal{T}_j|$. Further, if $\sum_{j=1}^{i-1}|\mathcal{T}_j|$ is subtracted from each vertex, the resulting tree will be labeled by post-order. This means if we consider the subsequence of $p$, denoted $p^{(i)}$, consisting of drivers preferring $\mathcal{T}_i$, the pair $(\mathcal{T}_i,p^{(i)})$ is a relabeling of a parking function in $\mathcal{SRP}_{|\mathcal{T}_{i}|}$.

\begin{proof}[Proof of claim]
Because the edge leaving the root of every $\mathcal{T}_i$ is not used until the very last driver and $(\mathcal{T},p) \in \mathcal{SRP}_n$, the roots of the $\mathcal{T}_i$'s, denoted $\{\rho_i\}_{i=1}^r$, must lie on the left border of the tree. 

By post-order labeling, a vertex is not labeled before all vertices below and all of its left siblings are given a label. Since all vertices below $\rho_1$ belong to $\mathcal{T}_1$, all other vertices of $\mathcal{T}_1$ are labeled before $\rho_1$. No vertex in $\mathcal{T}_2$ is labeled before $\rho_1$ since $\rho_1$ is on the left border of the tree, to the left of any of its siblings. Hence, $\mathcal{T}_1$ is labeled first and is in post-order.

In general, as $\rho_{i-1}$ is on the left border of $\mathcal{T}$, the vertices below $\rho_{i-1}$ are labeled before any vertex in $\mathcal{T}_i$. Thus, $\mathcal{T}_i$ has vertices labeled $1+\sum_{j=1}^{i-1}|\mathcal{T}_j|$ to $|\mathcal{T}_i|+\sum_{j=1}^{i-1}|\mathcal{T}_j|$. Subtracting $\sum_{j=1}^{i-1}|\mathcal{T}_j|$ from each vertex is the same as labeling, via post-order, the maximal subtree with root $\rho_i$ and vertices below $\rho_{i-1}$ (inclusive) deleted, as this deletion removes a branch on the left side of the tree that is labeled before any other vertex.
\end{proof}

Let $A_1, \hdots , A_r$ be a partition of $[n-1]$ such that $j \in A_\ell$ if and only if $p_j$ is a vertex in $\mathcal{T}_\ell$. $A_i$ is the indices of the drivers preferring the component $\mathcal{T}_i$, meaning $|A_i| = |\mathcal{T}_i|$. For each deleted edge $(u,v)$, except for when $v$ is the root, if $v$ is the $k^\tx{th}$ smallest vertex in its component, $\mathcal{T}_j$, mark the $k^\tx{th}$ smallest element in $A_j$. For $A_1$, let $v =  p_n$. These marked elements track both the final driver's preference and how to reconstruct the tree from its components. The collection $\{A_i\}_{i=1}^r$ partitions $[n-1]$ and will become the labels on the resulting ordered tree.

Relabel $\{(\mathcal{T}_i,p^{(i)})\}_{i=1}^r$ by post-order (notice this is the same as subtracting $\sum_{j=1}^{i-1}|\mathcal{T}_j|$ from $v \in V(\mathcal{T}_i)$), denoting them $\{(\overline{\mathcal{T}}_i,\overline{p}^{(i)}\}_{i=1}^r$. Use the inductive hypothesis to obtain the trees $\{\alpha\lp (\overline{\mathcal{T}}_i,\overline{p}^{(i)}) \rp\}_{i=1}^r$. Label the root of $\alpha\lp(\overline{\mathcal{T}}_i,\overline{p}^{(i)}) \rp$ with the marked vertex of $A_i$, then relabel the remaining vertices with the unmarked elements of $A_i$, preserving relative order. This is possible because $|A_i| = |\mathcal{T}_i|=|\alpha\lp (\overline{\mathcal{T}}_i,\overline{p}^{(i)}) \rp|$. Attach the roots of these trees to an unmarked vertex and order them so that the tree using the labels in $A_i$ is $i^\tx{th}$-from-the-left. This tree is $\alpha\lp(\mathcal{T},p)\rp$.

To reverse, let $P$ be an ordered tree with non-root vertices labeled by $[n-1]$. Deleting the root yields several components, denoted left-to-right as $P_i$ for $1 \leq i \leq r$. The set $A_i$ is the set of labels of $P_i$ where the root of $P_i$ is the marked element. For each, delete the root's label and relabel using $[|P_i|-1]$, preserving relative order, and apply $\alpha^{-1}$ to get the collection $\{(\overline{\mathcal{T}}_i,\overline{p}^{(i)})\}_{i=1}^r$. For each $v \in \overline{\mathcal{T}}_i$ and $\overline{p}_j^{(i)}$, add $\sum_{j=1}^{i-1}|\overline{\mathcal{T}}_j|$ to recover the pairs $(\mathcal{T}_i,p^{(i)})$.

Then for $1 \leq i \leq r-1$, attach the root of $\mathcal{T}_i$ to the vertex labeled $k+\sum_{j=1}^{i-1}|\mathcal{T}_j|$, placing it to the left of any siblings, where the marked element in $A_{i+1}$ is the $k^\tx{th}$ by relative order. Attach the root of $\mathcal{T}_r$ to a singleton with label $n$, the root of $\mathcal{T}$. Let the sequence $\{{i_j}\}$ be the increasing sequence of the elements of $A_i$. Then let $p_{i_j}=p^{(i)}_j$ to recover $p$.
\end{proof}

Finally, we combine results to prove Theorem \ref{thm:main}.

\begin{proof}[Proof of Theorem \ref{thm:main}]
Let $(T,p)$ be a prime parking function, if $\phi((T,p)) = (\sigma,T_\sigma,p_\sigma)$, we define

\[
\psi(T,p) = (\sigma,\alpha((T_\sigma,p_\sigma)).
\]
The function $\psi$ is a bijection by Proposition \ref{prop:SRPn} and Lemma \ref{lem:alpha}, so we conclude that

\[
P_n = n!|\mathcal{SRP}_n| = n!\cdot (n-1)! \cdot C_{n-1} = (2n-2)!.
\]
\end{proof}

%%%%%%%%%%%%%%%%%%%%%%%%
\section{Preimages of Paths Under $\alpha$} \label{sec:extras}
%%%%%%%%%%%%%%%%%%%%%%%%

We investigate special families of trees and parking functions of interest.

\subsection{Preimage Of Paths}

We study what kinds of parking functions $(T,p)$  appear under $\alpha^{-1}$  when we restrict the domain to trees which are paths. For a tree of size $n+1$, there are $n!$ such paths. Recall that $\mathcal{P}_n$ is the path of $n$ vertices upon which classical parking functions are defined.

\begin{prop}\label{prop:growth}
Let $(\mathcal{P}_{n+1},s)$ be a parking function satisfying $s_1 = 1$ and $s_i \leq i-1$ for $i \geq 2$. Then $\alpha(\mathcal{P}_{n+1},s)$ is one of the $n!$ paths with non-root vertices labeled by $[n]$.
\end{prop}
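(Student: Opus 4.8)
The plan is to induct on $n$ and trace the recursive construction of $\alpha$ one step at a time, showing that the hypothesis $s_1=1$, $s_i\le i-1$ forces the Step~1 decomposition of $(\mathcal{P}_{n+1},s)$ to produce a single component, and that this component is again $\mathcal{P}_n$ carrying a parking function of the same restricted type. First I would check that $\alpha$ is even defined on $(\mathcal{P}_{n+1},s)$, i.e.\ that $(\mathcal{P}_{n+1},s)\in\mathcal{SRP}_{n+1}$: for every non-root vertex $v\in[n]$ we have $\{1,\dots,v+1\}\subseteq\{i:s_i\le v\}$, so $|T_v|=v<v+1\le|\{i:s_i\le v\}|$ and the pair is prime; condition~(2) of Definition~\ref{def:SRPn} is vacuous for a path; and $\mathcal{P}_{n+1}$ is already labeled by post-order. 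By Proposition~\ref{prop:finalspot} the last driver parks at the root $n+1$, so after the first $n$ drivers park exactly the spots $1,\dots,n$ are occupied.

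The crux is to determine which edges are used before the final driver. Set $c_v=|\{i\le n:s_i\le v\}|$. An overflow count analogous to the proof of Proposition~\ref{prop:edges} shows the edge $(v,v+1)$ is used by one of the first $n$ drivers if and only if $c_v>v$ (the $v$ spots of $T_v$ are all filled, only by drivers preferring $T_v$, so some such driver overflows precisely when $c_v>v$). Since $s_i\le v$ for every $i\in\{1,\dots,v+1\}\cap[n]$, we get $c_v\ge v+1$ for $v\le n-1$, while $c_n=n$ because $s_i\le i-1\le n-1$ for all $i\le n$. Hence the unique edge not used before the last driver is the root edge $(n,n+1)$, so Step~1 deletes only that edge and the root, leaving a single component $\mathcal{T}_1=\mathcal{P}_n$ on $\{1,\dots,n\}$ with $p^{(1)}=(s_1,\dots,s_n)$, marked vertex $s_{n+1}$, and (in Step~2) $A_1=[n]$ with marked element $s_{n+1}$. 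I expect this identification of the unique unused edge to be the only real content; the rest is bookkeeping.

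To close the induction, observe that $(\mathcal{P}_n,(s_1,\dots,s_n))$ satisfies the hypotheses with $n-1$ in place of $n$, so by the inductive hypothesis $\alpha(\mathcal{P}_n,(s_1,\dots,s_n))$ is one of the $(n-1)!$ paths with non-root vertices labeled by $[n-1]$. Steps~3 and~4 of $\alpha$ relabel this path using $A_1=[n]$ (root $\mapsto s_{n+1}$, the remaining vertices order-preservingly from $[n]\setminus\{s_{n+1}\}$), which does not change its shape, and then attach its root to a new unlabeled vertex; the result is a path on $n+1$ vertices with non-root labels $[n]$. The base case $n=1$ is immediate, since then $s=(1,1)$ and $\alpha$ returns the unique $2$-vertex path labeled by $[1]$. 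Finally, since $s_1=1$ is forced and $s_i$ ranges over $[i-1]$ for $i\ge2$, there are $\prod_{i=2}^{n+1}(i-1)=n!$ admissible $s$; as $\alpha$ is injective (Lemma~\ref{lem:alpha}), these hit all $n!$ such paths, so in particular $\alpha(\mathcal{P}_{n+1},s)$ is always one of them.
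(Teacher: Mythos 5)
Your proof is correct and follows essentially the same route as the paper's: both arguments show that under the growth condition every edge except $(n,n+1)$ is used before the last driver, so Step~1 of $\alpha$ yields a single component $(\mathcal{P}_n,(s_1,\dots,s_n))$ of the same restricted type, and then induct, closing with the count of $n!$ admissible sequences. The only (cosmetic) difference is that you identify the used edges via the overflow counts $c_v$ in the spirit of Proposition~\ref{prop:edges}, whereas the paper tracks the drivers one by one and observes that driver $i$ parks at, but never prefers, spot $i$.
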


\begin{proof}
That $s$ is prime is easily checked. Since $s_1 =1$, we may delete $s_1$ and consider $s' = (s_2,s_3,\hdots,s_{n+1})$. Since $s'_i = s_{i+1} \leq i$, we have $|\{i:s'_i \leq k\}| \geq k$ for any $k \in [n]$, so $s'$ is a parking function on $\mathcal{P}_n.$ Further, $s'_i$ may be one of $i$-many choices, so there are $n!$-many $s'$, and thus $s$.

Park the drivers in order. The first driver takes spot 1 and since $s_2 = 1$, the second driver takes spot 2, crossing the edge from 1 to 2. Next, since $s_2 \leq 2$, and spots 1 and 2 are taken, the third driver takes spot 3, crossing the edge from 2 to 3. Continuing this, the $i^{\tx{th}}$ driver always parks at, but never prefers, spot $i$. When the final driver parks, all spots except for $n+1$ are filled and every edge has been used except the one between $n$ and $n+1$. Thus, by the construction of $\alpha$, the root of $\alpha(s,\mathcal{P}_{n+1})$ has one child. The inductive step in the proof of Lemma \ref{lem:alpha} tells us that the shape of the tree obtained by deleting the root from $\alpha(s,\mathcal{P}_{n+1})$ is the same as that of $\alpha\lp(1,s_1,\hdots,s_{n-1}) ,\mathcal{P}_{n}\rp$. But $(1,s_1,\hdots,s_{n-1})$ is a prime parking function on $\mathcal{P}_{n}$ with the same growth property as $s$. Therefore, its root also has one child. Iterating this argument, we see that the image of such parking functions under $\alpha$ consists of paths with non-root vertices labeled by $[n]$. Since $\alpha$ is a bijection and there are $n!$ choices for $s$, all $n!$-such paths must appear.
\end{proof}

It will prove useful to have a characterization of the $s_i$ in terms of the labels of $\alpha(\mathcal{P}_{n+1},s)$.

\begin{lemma}\label{lem:alternate}
Let $(\mathcal{P}_{n+1},s)$ be a prime parking function such that $s_1 = 1$ and $s_i \leq i-1$ for $2 \leq i \leq n+1$. Further, let $\sigma \in \mathfrak{S}_n$ be the permutation given by reading the labels traveling away from the root in $\alpha(\mathcal{P}_{n+1},s)$. Denote this path $P_\sigma$. Then for $2 \leq i \leq n+1$, we have 
\[
s_{i} = |\{j > n+2-i: \sigma_j < \sigma_{n+2-i}\}|+1.
\]
\end{lemma}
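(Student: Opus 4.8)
The plan is to induct on $n$, unwinding the recursive definition of $\alpha$ on the restricted family of paths identified in Proposition \ref{prop:growth}. The base case $n=1$ is immediate: the only such parking function is $(\mathcal{P}_2,(1,1))$, $\alpha$ sends it to the path with a single non-root vertex labeled $1$, so $\sigma=(1)$ and the formula reads $s_2=0+1=1$.

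For the inductive step I would first pin down exactly what $\alpha$ does to $(\mathcal{P}_{n+1},s)$. As computed inside the proof of Proposition \ref{prop:growth}, when the first $n$ drivers park, driver $i$ occupies spot $i$, every edge $(i,i+1)$ with $1\le i\le n-1$ gets used, and the only unused edge is $(n,n+1)$. Hence Step 1 of $\alpha$ isolates the root $n+1$ (which is ignored) and leaves a single nontrivial component $\mathcal{T}_1$: the path $1\to 2\to\cdots\to n$ rooted at $n$, which is already in post-order, with $A_1=[n]$, with associated subsequence $\overline{p}^{(1)}=(1,s_2,\dots,s_n)$, and with marked vertex $p_n=s_{n+1}$, hence marked element $s_{n+1}\in A_1$. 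Note $\overline{p}^{(1)}$ is again a prime parking function on $\mathcal{P}_n$ with the growth property ($s_1=1$, $s_i\le i-1$), so the inductive hypothesis applies to it, producing a path $P_{\sigma'}$ with $\sigma'\in\mathfrak{S}_{n-1}$. Step 4 then rebuilds $\alpha(\mathcal{P}_{n+1},s)$ as an unlabeled root whose unique child is the root of $P_{\sigma'}$ relabeled $s_{n+1}$, with the remaining vertices of $P_{\sigma'}$ relabeled by $[n]\setminus\{s_{n+1}\}$ via the unique increasing bijection $f\colon[n-1]\to[n]\setminus\{s_{n+1}\}$ (so $f(\ell)=\ell$ for $\ell<s_{n+1}$ and $f(\ell)=\ell+1$ for $\ell\ge s_{n+1}$). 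Reading labels away from the root therefore gives $\sigma_1=s_{n+1}$ and $\sigma_{m+1}=f(\sigma'_m)$ for $1\le m\le n-1$.

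With this structural description the formula splits into two checks. For $i=n+1$ the index $n+2-i$ equals $1$; since $\sigma_1=s_{n+1}$ while $\{\sigma_2,\dots,\sigma_n\}=[n]\setminus\{s_{n+1}\}$, the count $|\{j>1:\sigma_j<\sigma_1\}|$ is exactly $s_{n+1}-1$, which gives the claim. For $2\le i\le n$, set $k=n+1-i\in\{1,\dots,n-1\}$, so $n+2-i=k+1\ge 2$ lands in the recursive part; because $f$ is strictly increasing, $f(\sigma'_m)<f(\sigma'_k)\iff\sigma'_m<\sigma'_k$, whence $|\{j>k+1:\sigma_j<\sigma_{k+1}\}|=|\{m>k:\sigma'_m<\sigma'_k\}|$, and the inductive hypothesis applied to $\overline{p}^{(1)}$ (for which $\overline{p}^{(1)}_i=s_i$ when $2\le i\le n$) says this quantity plus $1$ equals $s_i$.

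I expect the main obstacle to be bookkeeping rather than anything conceptual: verifying carefully that Step 1 of $\alpha$ truly yields the single post-ordered component $\mathcal{T}_1$ described above (isolation of the root, the shape and labeling of $\mathcal{T}_1$, and that the marked element of $A_1$ is $s_{n+1}$), and then reconciling the two competing index conventions — the reversed index $n+2-i$ of the statement against the forward running of the parking procedure and the shift $m\mapsto m+1$ introduced by prepending the child $\sigma_1=s_{n+1}$.
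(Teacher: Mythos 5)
Your proof is correct and takes essentially the same approach as the paper's: both extract from the recursive construction of $\alpha$ that $\sigma_1 = s_{n+1}$ and, more generally, that $\sigma_i$ is the $s_{n+2-i}^{\text{th}}$ smallest of the labels not yet placed, which is exactly the stated rank formula. The paper asserts this ``in general'' without spelling out the recursion; your explicit induction on $n$ (isolating the single component $\mathcal{T}_1$, identifying the marked element $s_{n+1}$, and transporting the count through the increasing relabeling $f$) just makes the same bookkeeping rigorous.
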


\begin{proof}
 By the construction of $\alpha$ in Lemma \ref{lem:alpha}, we know $\sigma_1 = s_{n+1}$ and so $\sigma_2$ is the element in $[n]\setminus \{\sigma_1\}$ larger than exactly $s_{n}-1$ others (the $s_n^\tx{th}$ smallest element). In general, $\sigma_i$ is the $s_{n+2-i}^\tx{th}$ smallest element in $[n]\setminus \{\sigma_1,\sigma_2,\hdots,\sigma_{i-1}\}$. Thus, for $i \geq 2$, $s_i$ is given by the relative size of $\sigma_{n+2-i}$ in the set $[n]\setminus \{\sigma_1,\sigma_2,\hdots,\sigma_{n+1-i}\}$. We may write this as $s_i = |\{j > n+2-i : \sigma_j < \sigma_{n+2-i}\}|+1$.
\end{proof}

Of particular interest are the increasing parking functions that obey this growth restriction. We now examine which labellings $\sigma$ appear for $\alpha((\mathcal{P}_{n+1},s))$ with $s$ an increasing prime parking function.

\subsection{Image of Classical Increasing Parking Functions}

Borie \cite{BORIE} gives a bijection between $\mathfrak{S}_n(132),$ the permutations of length $n$ which avoid a 132 pattern, and classical increasing parking functions of length $n$. Let $\sigma \in \mathfrak{S}_n(132)$. Define for $m \in [n]$:

\[
mmp(0,m,0,0)(\sigma) = |\{i : |\{j: j<i, \sigma_j > \sigma_i\}|\geq m \}|,
\]
and set

\[
\phi(\sigma) = (mmp(0,n,0,0)+1,mmp(0,n-1,0,0)+1,\hdots, mmp(0,1,0,0)+1).
\]
Then $\phi(\sigma)$ is a classical increasing parking function and we have the following theorem due to Borie:

\begin{theorem}[Theorem 3.3, \cite{BORIE}]
$\phi$ is a bijection between $\mathfrak{S}_n(132)$ and increasing parking functions of length $n$.
\end{theorem}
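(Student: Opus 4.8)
The final statement is Borie's Theorem 3.3: the map $\phi$ sending $\sigma \in \mathfrak{S}_n(132)$ to $(\mathrm{mmp}(0,n,0,0)(\sigma)+1, \ldots, \mathrm{mmp}(0,1,0,0)(\sigma)+1)$ is a bijection between $132$-avoiding permutations of length $n$ and classical increasing parking functions of length $n$.

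The plan is to verify three things in order: (i) $\phi(\sigma)$ is always an increasing sequence; (ii) $\phi(\sigma)$ is always a parking function; and (iii) $\phi$ is invertible. For (i), I would unwind the definition: the $k$-th entry of $\phi(\sigma)$ is $\mathrm{mmp}(0,n+1-k,0,0)(\sigma)+1$, which counts (plus one) the positions $i$ having at least $n+1-k$ larger entries to their left. As $k$ increases, the threshold $n+1-k$ decreases, so the set of qualifying positions can only grow; hence the sequence is weakly increasing. For (ii), by Proposition~\ref{prop:prime} specialized to the path (stated right after it in the excerpt), $\phi(\sigma)$ is a parking function of length $n$ iff for every $m \in [n]$ we have $|\{i : \phi(\sigma)_i \le m\}| \ge m$; equivalently, the $m$-th smallest entry of $\phi(\sigma)$ is at most $m$. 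Since $\phi(\sigma)$ is already increasing, its $m$-th entry is $\mathrm{mmp}(0,n+1-m,0,0)(\sigma)+1$, so I must show $\mathrm{mmp}(0,n+1-m,0,0)(\sigma) \le m-1$, i.e.\ at most $m-1$ positions have $\ge n+1-m$ larger entries to their left. This is where $132$-avoidance enters: if a position $i$ has many larger entries before it, those larger entries, together with $\sigma_i$, would create a $132$ pattern unless they are themselves arranged in decreasing order up to position $i$; a careful counting argument (or an appeal to the standard structure theorem for $132$-avoiders, $\sigma = \sigma_L\, n\, \sigma_R$ with every entry of $\sigma_L$ larger than every entry of $\sigma_R$) bounds the number of such ``deep'' positions.

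For (iii), the cleanest route is to exhibit the inverse directly rather than count. The structural decomposition of $132$-avoiding permutations is recursive: writing $\sigma = \sigma_L\, n\, \sigma_R$ where $\sigma_L$ occupies the largest values and $\sigma_R$ the smallest, the statistics $\mathrm{mmp}(0,m,0,0)$ decompose compatibly between $\sigma_L$ and $\sigma_R$ (entries in $\sigma_R$ see all of $n$ and $\sigma_L$ to their left, contributing a predictable shift). I would set up the induction so that reading off the first few entries of $\phi(\sigma)$ determines the size of $\sigma_R$ and hence the split point, after which $\sigma_L$ and $\sigma_R$ are reconstructed from the corresponding sub-increasing-parking-functions by the inductive hypothesis. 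Since both $\mathfrak{S}_n(132)$ and the set of increasing parking functions of length $n$ have cardinality $C_n$, injectivity (equivalently, well-definedness of this inverse) suffices; but producing the explicit inverse map is more illuminating and is what I would write out.

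The main obstacle is step (ii)—showing $\phi(\sigma)$ lands in the parking functions—because it is the one place where the $132$-avoidance hypothesis is genuinely used, and the $\mathrm{mmp}$ statistic is defined in a slightly awkward ``at least $m$ larger entries to the left'' form that needs to be translated into a statement about how early in $\sigma$ a long decreasing run can start. I expect the recursive decomposition $\sigma = \sigma_L\, n\, \sigma_R$ to be the key tool: it makes both the parking-function condition and the invertibility fall out by induction on $n$, with the base case $n=1$ being trivial. If the decomposition is handled carefully at the start, steps (i)–(iii) become bookkeeping; the risk is only in getting the index shifts in the $\mathrm{mmp}$ bookkeeping exactly right.
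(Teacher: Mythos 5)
First, a point of comparison: the paper does not prove this statement. It is imported verbatim from \cite{BORIE} (Theorem 3.3 there) and used as a black box, so there is no internal proof to measure your argument against; what follows judges your proposal on its own terms.

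Your step (i) is correct. Your step (ii) is true, but you have misdiagnosed it: the bound $mmp(0,n+1-k,0,0)(\sigma)\le k-1$ holds for \emph{every} permutation, with no pattern condition, because a position $i$ has exactly $i-1$ entries to its left and therefore can have at least $n+1-k$ larger predecessors only if $i\ge n+2-k$, and there are only $k-1$ such positions. So $\phi$ sends all of $\mathfrak{S}_n$ into increasing parking functions, and the ``careful counting argument'' you anticipate is not where the work is. The entire content of the theorem is your step (iii), injectivity on $\mathfrak{S}_n(132)$ (which suffices, both sets having cardinality $C_n$), and that is precisely the step you leave as a sketch resting on an unjustified claim: that ``reading off the first few entries of $\phi(\sigma)$ determines the size of $\sigma_R$'' in the decomposition $\sigma=\sigma_L\,n\,\sigma_R$. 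This is the delicate point, not bookkeeping. For instance, the decreasing permutation $n,n-1,\dots,1$ has $\sigma_L=\emptyset$ and satisfies $mmp(0,m,0,0)(\sigma)=n-m$ for every $m$, so $\phi(\sigma)=(1,2,\dots,n)$; nothing in the initial entries visibly encodes that $n$ sits in position $1$. The decomposition does split the statistic cleanly --- writing $p$ for the position of $n$, one gets $mmp(0,m,0,0)(\sigma)=mmp(0,m-p,0,0)(\sigma_R)$ for $m\ge p$ and $mmp(0,m,0,0)(\sigma_L)+(n-p)$ for $m<p$ --- so an inductive inverse can be assembled, but recovering $p$ from $\phi(\sigma)$ needs an argument you have not supplied. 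A cleaner route to injectivity, and the one implicit in this paper, is the identity $\phi(\sigma)_{n+1-m}=|\{j>m:\sigma_j<\sigma_m\}|+1$ for $132$-avoiders (exactly the $|A_m|=|B_m|$ computation carried out in the theorem that follows the citation): it says each entry of $\phi(\sigma)$ records the rank of $\sigma_m$ within the suffix $\sigma_m,\dots,\sigma_n$, an encoding from which $\sigma$ is reconstructed uniquely by inserting values from the right.
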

We will show

\begin{theorem}
For $\sigma \in \mathfrak{S}_n(132)$, $\phi(\sigma)$ is the parking function obtained after deleting the leading 1 from  $\alpha^{-1}(P_\sigma)$.
\end{theorem}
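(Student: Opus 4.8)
The plan is to peel off both descriptions of the target sequence until the statement reduces to a single identity about $132$-avoiding permutations, and then to prove that identity by induction using the block decomposition of $\mathfrak{S}_n(132)$.

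First I would unwind the right-hand side of the claim. By Proposition \ref{prop:growth} together with the injectivity of $\alpha$ (Lemma \ref{lem:alpha}), the restriction of $\alpha$ to the parking functions $(\mathcal{P}_{n+1},s)$ with $s_1=1$ and $s_i\le i-1$ is a bijection onto the $n!$ paths with non-root vertices labeled by $[n]$; hence $\alpha^{-1}(P_\sigma)$ is such a pair $(\mathcal{P}_{n+1},s)$, and deleting its leading $1$ yields $s'=(s_2,\dots,s_{n+1})$, a classical parking function of length $n$. By Lemma \ref{lem:alternate}, for $1\le\ell\le n$ we have $s'_\ell=s_{\ell+1}=|\{j>n+1-\ell:\sigma_j<\sigma_{n+1-\ell}\}|+1$, while by definition the $\ell$-th coordinate of $\phi(\sigma)$ is $mmp(0,n+1-\ell,0,0)(\sigma)+1$. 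Setting $m=n+1-\ell$, which runs over $[n]$, the theorem is therefore equivalent to the identity, which I will call $(\star)$:
\[
|\{j>m:\sigma_j<\sigma_m\}|=|\{i\in[n]:|\{j<i:\sigma_j>\sigma_i\}|\ge m\}|
\]
for every $\sigma\in\mathfrak{S}_n(132)$ and every $m\in[n]$. I will write $a_m:=|\{j>m:\sigma_j<\sigma_m\}|$ (the number of inversions of $\sigma$ with left endpoint $m$) and $b_m:=|\{i:\ell_i\ge m\}|$ with $\ell_i:=|\{j<i:\sigma_j>\sigma_i\}|$; note $\sum_m a_m=\sum_m b_m=\operatorname{inv}(\sigma)$, and that $132$-avoidance is genuinely needed here (for $\sigma=132$ one gets $(a_1,a_2,a_3)=(0,1,0)\ne(1,0,0)=(b_1,b_2,b_3)$).

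To prove $(\star)$ I would induct on $n$; the base case $n=1$ is $a_1=b_1=0$. For $n\ge2$ write $\sigma=\alpha\,n\,\beta$, where the maximum $n$ sits in position $p+1$, the prefix $\alpha$ occupies positions $1,\dots,p$ and is a $132$-avoiding word on the value set $\{q+1,\dots,n-1\}$, and the suffix $\beta$ occupies positions $p+2,\dots,n$ and is a $132$-avoiding permutation of $[q]$, with $p+q+1=n$; this split is forced because no letter to the left of $n$ may be smaller than a letter to its right. Let $\alpha'\in\mathfrak{S}_p(132)$ be the standardization of $\alpha$ (subtract $q$), and write $a_m(\tau),b_m(\tau),\ell_i(\tau)$ for the analogous quantities of a permutation $\tau$; order-preserving relabeling changes none of them. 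The facts to use: every letter of $\beta$ is smaller than every letter of $\alpha$ and smaller than $n$. Hence for $m\le p$ all $q$ letters of $\beta$ lie to the right of position $m$ with smaller value while the entry $n$ does not, so $a_m=q+a_m(\alpha')$; also $a_{p+1}=q$; and for $m>p+1$ (position $m$ inside $\beta$) only $\beta$ is seen, so $a_m=a_{m-p-1}(\beta)$. Dually, $\ell_i=\ell_i(\alpha')\le p-1$ for $i\le p$, $\ell_{p+1}=0$, and $\ell_i=p+1+\ell_{i-p-1}(\beta)\ge p+1$ for $i>p+1$; hence for $m\le p+1$ every $\beta$-position clears the threshold $m$ while position $p+1$ contributes nothing, giving $b_m=q+b_m(\alpha')$ (and $b_{p+1}=q$, since $\ell_i(\alpha')\le p-1$), whereas for $m>p+1$ only $\beta$-positions can contribute, giving $b_m=b_{m-p-1}(\beta)$. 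Therefore
\[
(a_1,\dots,a_n)=\bigl(q+a_1(\alpha'),\dots,q+a_p(\alpha'),\,q,\,a_1(\beta),\dots,a_q(\beta)\bigr),
\]
\[
(b_1,\dots,b_n)=\bigl(q+b_1(\alpha'),\dots,q+b_p(\alpha'),\,q,\,b_1(\beta),\dots,b_q(\beta)\bigr),
\]
and the inductive hypothesis applied to $\alpha'$ and to $\beta$ makes these agree coordinatewise, establishing $(\star)$ and hence the theorem.

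The main obstacle is the bookkeeping in that last step: cleanly splitting each of $a_m$ and $b_m$ into the three regimes (inside $\alpha$, the slot of $n$, inside $\beta$), and checking that the threshold split $m\le p+1$ versus $m>p+1$ lines up on both sides once standardization has shifted both the values and the statistics; everything else is routine. As a by-product, $(\star)$ shows $a_m$ is nonincreasing in $m$ for $132$-avoiders, equivalently that $s'$ is weakly increasing, which is consistent with Borie's theorem that $\phi(\sigma)$ is an increasing parking function; one could instead try a direct inversion-counting bijection for $(\star)$, but the block recursion seems cleanest.
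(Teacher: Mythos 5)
Your reduction is exactly the paper's: both arguments invoke Proposition \ref{prop:growth} and Lemma \ref{lem:alternate} to turn the statement into the identity $|A_m|=|B_m|$ (your $a_m=b_m$) for all $m\in[n]$, where $A_m=\{j>m:\sigma_j<\sigma_m\}$ and $B_m=\{i:|\{j<i:\sigma_j>\sigma_i\}|\ge m\}$. Where you genuinely diverge is in proving that identity. The paper shows the two sets are literally \emph{equal}: if $k\in A_m$, then any $i<m$ with $\sigma_i<\sigma_k$ would create the pattern $\sigma_i\sigma_m\sigma_k$, so $\sigma_k$ lies below all of $\sigma_1,\dots,\sigma_m$ and $k\in B_m$; conversely, if $j\in B_m$ had $\sigma_j>\sigma_m$, avoidance of $132$ forces every $\sigma_i$ with $m<i<j$ below $\sigma_j$, leaving at most $m-1$ earlier entries above $\sigma_j$, contradicting $j\in B_m$. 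That is two short pattern arguments, needs no induction, and yields the stronger conclusion $A_m=B_m$ rather than just equal cardinalities. Your route via the block decomposition $\sigma=\alpha\,n\,\beta$ and induction on $n$ is correct --- I checked the three regimes for $a_m$ and for $b_m$, and the threshold alignment at $m=p+1$ works out, including the degenerate cases $p=0$ and $q=0$ --- but it carries more bookkeeping and delivers only the numerical identity. What it buys in exchange is structural insight: the coordinatewise recursion makes it transparent that $(a_m)$ is weakly decreasing in $m$, hence that the resulting sequence is an increasing parking function, which the paper's set-equality argument does not immediately exhibit; your explicit example $\sigma=132$ showing that avoidance is genuinely needed is also a worthwhile addition. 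Either proof is acceptable.
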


\begin{proof}
Fix $\sigma \in \mathfrak{S}_n(132)$ and let $(\mathcal{P}_{n+1},p') = \alpha^{-1}(P_\sigma)$. By Proposition \ref{prop:growth}, we know we may write $p' = (1,p_1,p_2,\hdots,p_n)$, where $p=(p_1,p_2,\hdots,p_n)$ is a classical increasing parking function. For brevity, define for $m \in [n]$:

\begin{align*}
A_m &= \{j: j > m \tx{ and }\sigma_j < \sigma_m\},
\end{align*}
and

\begin{align*}
B_m &= \{i : |\{j: j<i, \sigma_j > \sigma_i\}|\geq m \} \\
&= \{i>m : \sigma_i \tx{ smaller than at least $m$ of } \{\sigma_1,\hdots , \sigma_{i-1}\}\}.
\end{align*}

Since $|B_m| = mmp(0,m,0,0)(\sigma)$, we have $\phi(\sigma) = (|B_n|+1,|B_{n-1}|+1,\hdots,|B_1|+1)$. From Lemma \ref{lem:alternate} we have $p_i = p_{i+1}' = |A_{n+1-i}|+1$, so it is sufficient to show that $|A_m| = |B_m|$ for $m \in [n]$. We show the sets are the same.

Fix $m \in [n]$ and let $k \in A_m$. By definition, $m < k$ and $\sigma_k < \sigma_m$. For $i < m$, if $\sigma_i < \sigma_k$, then $\sigma$ has the 132 pattern $\sigma_i\sigma_m\sigma_k$, which is not possible. Hence, $\sigma_k < \sigma_i$ for $1 \leq i \leq m$, so $k \in B_m$.

On the other hand, let $j \in B_m$. If $\sigma_j < \sigma_m$, then $j \in A_m$, so suppose $\sigma_j > \sigma_m$. Since $\sigma \in \mathfrak{S}_n(132)$, $\sigma_i < \sigma_j$ for $m+1 \leq i \leq j-1$. Thus, for indices smaller than $j$, only elements from $\{\sigma_1,\sigma_2,\hdots,\sigma_{m-1}\}$ may be larger than $\sigma_j$. However, $|\{\sigma_1,\sigma_2,\hdots,\sigma_{m-1}\}| = m-1 < m$, contradicting that $j \in B_m$. Therefore $\sigma_j < \sigma_m$, so $j \in A$.
\end{proof}

%\subsection{Other Cases}
%
%Classical parking functions $(p,\mathcal{P}_n)$ in which the first $n-1$ drivers prefer a unique spot and the final driver prefers $1$ correspond to stars in which the label of the $i^\tx{th}$ child (from the left) is the $j$ such that $p_j = i$. [NOTE: put this as a remark somewhere else or maybe just not include it at all]

%%%%%%%%%%%%%%%%%%%%%%%%%%
\section{Parking Distributions}\label{sec:PD}
%%%%%%%%%%%%%%%%%%%%%%%%%%

Recall that a parking distribution is a parking function $(T,s)$ such that the entries of $s$ are weakly increasing. Following the decomposition in Section \ref{sec:gfproof}, we let $\widetilde{F}_n$ be the total number of parking distributions on trees with $n$ vertices, and $\widetilde{P}_n$ be the corresponding number of prime parking distributions. Set 
\[
\widetilde{F}(x) = \sum\limits_{n \geq 1} \widetilde{F}_n \dfrac{x^n}{n!} \tx{\hspace{1cm}and\hspace{1cm}} \widetilde{P}(x) = \sum\limits_{n \geq 1} \widetilde{P}_n \dfrac{x^n}{n!}.
\]
Notice that these are exponential generating functions, unlike $F(x)$ and $P(x)$, as we no longer need to account for the order of the preference sequences. Decomposing a parking distribution $(T,s)$ into the ``core'' prime component and collection of $r$ other components, as in Section \ref{sec:gfproof}, we get

\[
\widetilde{F}_n = \sum\limits_{r\geq 0}\frac{1}{r!}\sum\limits_{\sum\limits_{i=0}^rk_i=n}\widetilde{P}_{k_{0}}\widetilde{F}_{k_{1}}\cdots \widetilde{F}_{k_{r}}{n \choose k_0,k_1,\hdots,k_r} (k_0)^r.
\]
Summing over $n$, we find

\begin{equation}\label{eq:pdrelation1}
\widetilde{F}(x) = \widetilde{P}\lp x e^{\widetilde{F}(x)}\rp,
\end{equation}
which is the same relationship for parking functions in Equation \eqref{eq:composition}. We then turn our attention to prime parking distributions and prove 

\primedistr*

\begin{proof}
We let $P_n^*$ be the total number of tuples $(T,p,v)$, called \emph{marked} prime parking distributions, where $(T,p)$ is a prime parking distribution and $v$ is a leaf of $T$ with $|T|=n$. Define the exponential generating function
\[
P^*(x) = \sum\limits_{n\geq 1}P_n^*\frac{x^n}{n!}.
\]

We count $P_n^*$ in two ways in order to determine the coefficients of $\widetilde{P}(x)$. We may construct a marked prime parking distribution from a prime parking distribution $(T,p)$ with $|T|=n-1$ by ``growing" the marked leaf from some vertex in $T$. We choose one vertex $w$ in $T$ which has $j \geq 1$ drivers preferring it, select a label for the marked leaf $v$, attach $v$ as a child, add a driver preferring $v$, and change $1 \leq i \leq j$ drivers preferring $w$ to prefer $v$ instead. The number of choices we can make for $v$'s parents and number of drivers whose preferences we change is $\sum\limits_{w \in V(T)}|\{ i : p_i = w\}| = n-1$. On the other hand, any marked prime parking distribution can be changed to a regular prime parking distribution by deleting the marked vertex $v$, deleting one driver preferring it, and changing the preference of other drivers preferring $v$ to instead prefer $v$'s parent. Notice since the marked parking distribution is prime, at least two drivers prefer the marked leaf. Figure \ref{fig:marked} has several examples where the shaded vertex is added and marked letters of $p$ are drivers whose preference was changed.
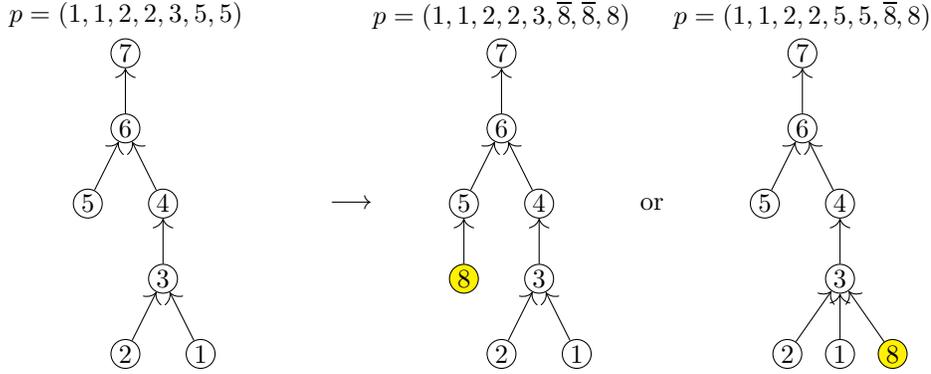
\begin{figure}[h!]
\begin{center}
\begin{tikzpicture}

\node[circle,draw,inner sep=1pt] (a) at (0,0) {7};
\node[circle,draw,inner sep=1pt] (b) at (0,-1) {6};
\node[circle,draw,inner sep=1pt] (c) at (-.5,-2) {5};
\node[circle,draw,inner sep=1pt] (d) at (.5,-2) {4};
\node[circle,draw,inner sep=1pt] (e) at (.5,-3) {3};
\node[circle,draw,inner sep=1pt] (f) at (0,-4) {2};
\node[circle,draw,inner sep=1pt] (g) at (1,-4) {1};
\node (h) at (0,.5) {$p=(1,1,2,2,3,5,5)$};

\draw[arrows={->[scale=1.5]}] (b)--(a);
\draw[arrows={->[scale=1.5]}] (c)--(b);
\draw[arrows={->[scale=1.5]}] (d)--(b);
\draw[arrows={->[scale=1.5]}] (e)--(d);
\draw[arrows={->[scale=1.5]}] (f)--(e);
\draw[arrows={->[scale=1.5]}] (g)--(e);

\node[circle,draw,inner sep=1pt] (i) at (5,0) {7};
\node[circle,draw,inner sep=1pt] (j) at (5,-1) {6};
\node[circle,draw,inner sep=1pt] (k) at (4.5,-2) {5};
\node[circle,draw,inner sep=1pt] (l) at (5.5,-2) {4};
\node[circle,draw,inner sep=1pt] (m) at (5.5,-3) {3};
\node[circle,draw,inner sep=1pt] (n) at (5,-4) {2};
\node[circle,draw,inner sep=1pt] (o) at (6,-4) {1};
\node[circle,draw,fill=yellow,inner sep=1pt] (p) at (4.5,-3) {8};
\node (q) at (5,.5) {$p=(1,1,2,2,3,\overline{8},\overline{8},8)$};

\draw[arrows={->[scale=1.5]}] (j)--(i);
\draw[arrows={->[scale=1.5]}] (k)--(j);
\draw[arrows={->[scale=1.5]}] (l)--(j);
\draw[arrows={->[scale=1.5]}] (m)--(l);
\draw[arrows={->[scale=1.5]}] (n)--(m);
\draw[arrows={->[scale=1.5]}] (o)--(m);
\draw[arrows={->[scale=1.5]}] (p)--(k);

\node (r) at (3,-2) {$\longrightarrow$};
\node (s) at (7,-2) {or};

\node[circle,draw,inner sep=1pt] (t) at (9,0) {7};
\node[circle,draw,inner sep=1pt] (u) at (9,-1) {6};
\node[circle,draw,inner sep=1pt] (v) at (8.5,-2) {5};
\node[circle,draw,inner sep=1pt] (w) at (9.5,-2) {4};
\node[circle,draw,inner sep=1pt] (x) at (9.5,-3) {3};
\node[circle,draw,inner sep=1pt] (y) at (8.8,-4) {2};
\node[circle,draw,inner sep=1pt] (z) at (9.5,-4) {1};
\node[circle,draw,fill=yellow,inner sep=1pt] (aa) at (10.2,-4) {8};
\node (ab) at (9,.5) {$p=(1,1,2,2,5,5,\overline{8},8)$};

\draw[arrows={->[scale=1.5]}] (u)--(t);
\draw[arrows={->[scale=1.5]}] (v)--(u);
\draw[arrows={->[scale=1.5]}] (w)--(u);
\draw[arrows={->[scale=1.5]}] (x)--(w);
\draw[arrows={->[scale=1.5]}] (y)--(x);
\draw[arrows={->[scale=1.5]}] (z)--(x);
\draw[arrows={->[scale=1.5]}] (aa)--(x);

\end{tikzpicture}
\caption{Two possibilities when adding the marked leaf.}
\label{fig:marked}
\end{center}
\end{figure}

This means for $n \geq 2$,

\[
P_n^* = n(n-1)\widetilde{P}_{n-1},
\]
so noting $P_0^* = 0$ and $P_1^*=1$ and summing over $n$, we get

\begin{equation}\label{eq:marked1}
P^*(x) = x +x^2\widetilde{P}'(x).
\end{equation}

For the second equation, we decompose a marked prime parking distribution $(T,p,v)$ as in Section \ref{subsec:alpha}: park all drivers except for one preferring $v$. For a general picture, see Figure \ref{fig:distr_decomp}. Dashed edges denote those unused before the final driver. As before, the edges which have not yet been used define $r+1$, for some $r \geq 0$, components of size $k_i$ with prime parking functions, one of which is marked. Accounting for the edges connecting the components, the label of the root, and the labels on the components, we have for $n \geq 2$,
\[
P_n^* = n \sum\limits_{r \geq 0}\sum\limits_{\sum k_i = n-1} {n-1 \choose k_0,k_1,\hdots,k_r} P_{k_{0}}^*\widetilde{P}_{k_{1}}\cdots \widetilde{P}_{k_{r}}k_1k_2\cdots k_r,
\]
so summing over $n$ with $P_0^* = 0$ and $P_1^* = 1$, we get

\begin{equation}\label{eq:marked2}
P^*(x) = x + \frac{xP^*(x)}{1-x\widetilde{P}'(x)}.
\end{equation}

\begin{figure}[h]
\begin{center}
\begin{tikzpicture}
\node[ellipse,draw, minimum width=65pt, minimum height=90pt] at (0,0) {$P_{k_{0}}^*$};
\node[ellipse,draw, minimum width=65pt, minimum height=90pt] at (4,0) {$\widetilde{P}_{k_{1}}$};
\node[ellipse,draw, minimum width=65pt, minimum height=90pt] at (10,0) {$\widetilde{P}_{k_{r}}$};
\node[circle,draw,inner sep=1pt] at (-.5,-1) {$v$};

\node (a) at (10,1.55) {};
\node[circle,draw,radius = 1cm] (b) at (12,1.55) {}; %root
\node at (7,0) {$\hdots$};

\draw[arrows={->[scale=1.5]},dashed] (a) -- (b);
\draw[arrows={->[scale=1.5]},dashed] (0,1.55) -- (3.5,-1);
\draw[arrows={->[scale=1.5]},dashed] (4,1.55) -- (6.5,1);
\draw[arrows={->[scale=1.5]},dashed] (7.5,1.55) -- (10,.5);

%P_0, P_1, P_r
\node at (0,1.3){
\scalebox{.4}{
\begin{forest}
random tree/.style n args={2}{% #1=max levels, #2=max children
if={#1>0}{repeat={random(0,#2)}{append={[,random tree={#1-1}{#2}]}}}{},
  parent anchor=center, child anchor=center, grow=south},
[,random tree={3}{4}]
\end{forest}
}
};

\node at (4,0.9){
\scalebox{.4}{
\begin{forest}
random tree/.style n args={2}{% #1=max levels, #2=max children
if={#1>0}{repeat={random(0,#2)}{append={[,random tree={#1-1}{#2}]}}}{},
  parent anchor=center, child anchor=center, grow=south},
[,random tree={3}{4}]
\end{forest}
}
};

\node at (10,0.95){
\scalebox{.4}{
\begin{forest}
random tree/.style n args={2}{% #1=max levels, #2=max children
if={#1>0}{repeat={random(0,#2)}{append={[,random tree={#1-1}{#2}]}}}{},
  parent anchor=center, child anchor=center, grow=south},
[,random tree={3}{4}]
\end{forest}
}
};

\end{tikzpicture}
\caption{Decomposition based on final driver's movement.}
\label{fig:distr_decomp}
\end{center}
\end{figure}
Combining Equations \eqref{eq:marked1} and \eqref{eq:marked2}, we see

\[
x\lp\widetilde{P}'(x)\rp^2 + (x-1)\widetilde{P}'(x) +1 = 0,
\]
and so as $\widetilde{P}'(0) = 1$,

\[
\widetilde{P}'(x) = \frac{1-x-\sqrt{x^2-6x+1}}{2x},
\]
which is the ordinary generating function for the large Schr{\"o}der numbers. Hence,
\[
\widetilde{P}'(x) = \sum\limits_{n \geq 0} n!S_n\frac{x^n}{n!},
\]
meaning
\[
\widetilde{P}(x) = \sum\limits_{n \geq 1} (n-1)!S_{n-1}\frac{x^n}{n!}.
\]
\end{proof}

In addition to the relationship in Equation \eqref{eq:pdrelation1}, we can let $F_n^*$ denote the total number of parking distributions on trees with $n$ vertices with one leaf marked and let
\[
F^*(x) =\sum\limits_{n \geq 1}F_n^*\frac{x^n}{n!}.
\]
Constructing a parking distribution counted by $F_n^*$ by ``growing" it from a parking distribution counted by $\widetilde{F}_{n-1}$ gives $n$ choices for the leaf label, $n-1$ choices of nodes to attach the marked leaf to without reassigning drivers, and $n-1$ choices for attaching the marked leaf to a node and reassigning at least one driver from the parent node. Thus, $F_n^* = 2n(n-1)\widetilde{F}_{n-1},$ so
\begin{equation}\label{eq:Fstar1}
F^*(x) = x + 2x^2\widetilde{F}'(x).
\end{equation}

On the other had, if a parking distribution on a marked tree with $n$ vertices is decomposed by considering the final unfilled node when the drivers on the marked leaf park last, the final node is of one of three types: the root, the marked leaf, or neither. Therefore, we have for $n \geq 2$
\[
\begin{split}
F_n^* &= \sum\limits_{r\geq0}\frac{n}{r!} \sum\limits_{\sum\limits_{i=0}^rk_i=n-1}{n-1 \choose k_0,\hdots,k_r}F_{k_{0}}^*\widetilde{F}_{k_{1}}\cdots \widetilde{F}_{k_{r}} \\
&+ n(n-1)F_{n-1} \\
&+ \delta_{n \geq 3}\sum\limits_{r\geq0}\frac{n}{r!} \sum\limits_{k+\sum\limits_{i=0}^rk_i=n-1}{n-1 \choose k_0,\hdots,k_r,k}F_{k_{0}}^*\widetilde{F}_{k_{1}}\cdots \widetilde{F}_{k_{r}}\widetilde{F}_kk,
\end{split}
\]
where $\delta_{n\geq3}$ is 1 if $n\geq 3$ and 0 otherwise. Summing over $n$ we get

\begin{equation}\label{eq:Fstar2}
F^* = x + xF^*e^F +x^2F' + x^2F^*F'e^F.
\end{equation}
Combining Equations \eqref{eq:Fstar1} and \eqref{eq:Fstar2}, we obtain a differential equation satisfied by $\widetilde{F}$,

\[
\widetilde{F}' = e^{\widetilde{F}} (1+x\widetilde{F}')(1+2x\widetilde{F}'),
\]
with initial condition $\widetilde{F}(0) = 0$. We note that the generating function for general tree parking functions, $F(x)$, satisfies the similar equation

\[
F' = e^F(1+xF')^2.
\]
For details, see Equation (7) in \cite{LACKNER2016}.

%\section*{Asymptotics}
%
%The expected number of $p \in [n]^n$ such that $(T,p)$ is a prime parking function for a random $|T|=n$ is (using Stirling's Formula)
%
%\[
%\dfrac{P_n}{n^{n-1}} \sim \dfrac{\sqrt{\pi n}\cdot 2^{2n+1} \cdot (n-1)^{n+1}}{e^{2n-2}}.
%\]
%Further using results in \cite{LACKNER2016} we have the probability that a given parking function is prime
%
%\[
%\dfrac{P_n}{F_n} \sim \sqrt{2}\cdot \lp \frac{2}{e}\rp^{n-2}.
%\]

%%%%%%%%%%%%%%%%%%%%%%
\section{Conclusion} \label{sec:conclusion}
%%%%%%%%%%%%%%%%%%%%%%%%

This paper extends the results by Lackner and Panholzer \cite{LACKNER2016} to prime parking functions on trees, from which all tree parking functions can be constructed. Additionally, we enumerate prime parking distributions, the natural generalization of increasing classical parking functions. There are many open questions regarding tree parking functions, of which we mention several:

\begin{enumerate}
%\item Not all trees have prime parking functions. For example, for a pair $(T,p)$ to exist, it is necessary (but not sufficient) that any vertex distance $i$ from the root has no more than $i+1$ children. Another necessary (but not sufficient) condition is that there are no more than $2^{i-1}$ vertices distance $i$ from the root. Is there a characterization of trees which support prime parking functions and, if so, how many are there of a given size?

%\item Give a bijective proof for Theorem \ref{thm:primedistr}.

%\item Solve Equation \eqref{eq:pdrelation2} and extract coefficients.

\item Find an ``interesting" direct bijection from prime tree parking functions to $\mathfrak{S}_{2n-2}$, preferably preserving some statistics.

\item Are there families of trees, such as caterpillars or binary trees, which support an interesting sequence of prime parking functions?

\item In \cite{LACKNER2016}, the authors find bounds for the number of parking functions, $S(T,n)$, on a given tree $T$ and ask if $S(T,n)$ has a ``simple characterization". We ask the same question for prime parking functions on a given tree. 

\item Butler, Graham, and Yan \cite{BUTLER2017} consider parking distributions on trees in the case where $n+k$ drivers attempt to park and exactly $k$ do not find a space. Can their results be refined to consider only prime parking distributions?
\end{enumerate}

%\section*{\refname}

\bibliographystyle{abbrv}
\bibliography{PrimeReferences}

% --------------------------------------------------------------
%     You don't have to mess with anything below this line.
% --------------------------------------------------------------
 
\end{document}